\documentclass[11pt,a4paper]{article}

\usepackage[T1]{fontenc}
\usepackage[utf8]{inputenc}
\usepackage{lmodern}
\usepackage{amsmath,amssymb,amsthm,mathtools}
\usepackage[margin=1in]{geometry}
\usepackage{microtype}

\allowdisplaybreaks

\newtheorem{theorem}{Theorem}[section]
\newtheorem{lemma}[theorem]{Lemma}
\newtheorem{proposition}[theorem]{Proposition}

\begin{document}

\begin{center}
{\Large\bfseries Ball Rigidity of Local Minimizing Domains for the Best Fractional Sobolev Constant: The Subquadratic Case\par}
\vspace{1.2em}
{\large Zikang Deng\par}
\vspace{0.35em}
Beijing Normal University
\end{center}

\begin{abstract}
After Corollary 1.3 in \emph{Calculus of Variations and Partial Differential Equations} 60 (2021), Paper 231, Djitte, Fall, and Weth asked whether, when $1<p<2$, a volume-constrained local minimizing domain for the best fractional Sobolev constant must still be a ball. This paper solves that problem. Let $0<s<1$, $1<p<2$, and let $\Omega\subset\mathbb R^N$ be a bounded $C^3$ domain. If $\Omega$ is a local minimizing domain for
\[
\lambda_{s,p}(\Omega)=\inf\bigl\{[u]_s^2:u\in\mathcal H_0^s(\Omega),\ \|u\|_{L^p(\Omega)}=1\bigr\}
\]
under smooth volume-preserving deformations, then $\Omega$ is a ball. The proof first uses the fractional Hadamard formula to reduce shape minimality to the overdetermined boundary condition $u/\delta^s=C_0$. To overcome the moving-plane obstruction caused by the failure of $u^{p-1}$ to be Lipschitz at zero, we establish a weighted singular narrow-domain maximum principle whose absorption factor is exactly the $sp/N$ power of the measure of the negative set. The difficulty at a corner is resolved by a finite boundary expansion: setting $\rho=sp$, the boundary quotient is composed of finitely many constant-coefficient normal powers $\delta^{k\rho}$ and a $C^{1,\varepsilon}$ remainder; when $k\rho=1$, the unique resonant correction is $\delta\log\delta$. This expansion makes all lower-order normal terms on the two sides of an orthogonal corner cancel, thereby yielding the first-order tangential vanishing required by the moving-plane corner lemma. The method does not require the domain to be convex and covers the full range $0<s<1$ and $1<p<2$.
\end{abstract}

\noindent\textbf{Keywords:} fractional Laplacian; shape optimization; Hadamard formula; overdetermined problem; moving planes; boundary expansion

\section{Introduction}

Let $0<s<1$, and take the normalization constant
\begin{equation}
c_{N,s}=\pi^{-N/2}s4^s\frac{\Gamma\!\left(\frac N2+s\right)}{\Gamma(1-s)}.
\tag{1.1}
\end{equation}
For $u\in H^s(\mathbb R^N)$, write
\begin{equation}
[u]_s^2=\frac{c_{N,s}}{2}\iint_{\mathbb R^{2N}}
\frac{(u(x)-u(y))^2}{|x-y|^{N+2s}}\,dx\,dy.
\tag{1.2}
\end{equation}
If $\Omega\subset\mathbb R^N$ is a bounded open set, define
\[
\mathcal H_0^s(\Omega)=\overline{C_c^\infty(\Omega)}^{[\,\cdot\,]_s}
\]
and the best fractional Sobolev constant
\begin{equation}
\lambda_{s,p}(\Omega)=\inf\left\{[u]_s^2:u\in\mathcal H_0^s(\Omega),\ \int_\Omega |u|^p\,dx=1\right\}.
\tag{1.3}
\end{equation}

Djitte, Fall, and Weth~[1] established the fractional Hadamard formula for $\lambda_{s,p}$ and used it to study shape extrema under a fixed-volume constraint. Set
\[
2_s^*=\begin{cases}
\dfrac{2N}{N-2s},&N>2s,\\[0.4em]
\infty,&N\le 2s.
\end{cases}
\]
Within the range $1\le p<2_s^*$ considered in that paper, they proved that, when
\[
p\in\{1\}\cup[2,\infty),
\]
every bounded $C^3$ volume-constrained local minimizing domain is a ball; immediately after that result, they pointed out that the case $1<p<2$ remained an open problem. The difficulty does not come from the shape derivative: in this range the positive minimizer is still unique, and the Hadamard formula still yields an overdetermined condition. The real obstruction is that the nonlinearity $t\mapsto t^{p-1}$ in the Euler--Lagrange equation is not locally Lipschitz at $t=0$, so the original fractional moving-plane theorem~[3] cannot be applied directly with a bounded secant coefficient.

Djitte and Minlend~[2] later proved the corresponding result for convex $C^{1,1}$ domains by means of continuous Steiner symmetrization. This paper removes the convexity assumption and gives the general $C^3$ result required by the original problem.

\medskip
\noindent\textbf{Theorem 1.1 (Main theorem).}
Let $N\ge1$, $0<s<1$, $1<p<2$, and let $\Omega\subset\mathbb R^N$ be a bounded $C^3$ domain. If $\Omega$ is a local minimizing domain for the map
\[
\Omega\longmapsto\lambda_{s,p}(\Omega)
\]
under fixed-volume $C^2$ diffeomorphic deformations, then $\Omega$ is a ball.

\medskip
The main theorem follows from the following rigidity theorem for an overdetermined problem.

\medskip
\noindent\textbf{Theorem 1.2 (Rigidity of the overdetermined problem).}
Let $N\ge1$, $0<s<1$, $0<q<1$, and let $\Omega\subset\mathbb R^N$ be a bounded $C^3$ domain. Suppose that there exist $\Lambda>0$ and a nontrivial function
\[
u\in\mathcal H_0^s(\Omega)\cap L^\infty(\mathbb R^N),\qquad u\ge0,
\]
such that, in the weak sense,
\begin{equation}
\begin{cases}
(-\Delta)^s u=\Lambda u^q,&\text{in }\Omega,\\
u=0,&\text{in }\mathbb R^N\setminus\Omega,\\
\dfrac{u}{\delta^s}=C_0>0,&\text{on }\partial\Omega,
\end{cases}
\tag{1.4}
\end{equation}
where $\delta(x)=\operatorname{dist}(x,\mathbb R^N\setminus\Omega)$ and the boundary condition is understood in the sense of continuous traces. Then $\Omega$ is a ball.

The proof contains two new estimates. The first treats the singular secant coefficient in the moving-plane argument. Set
\[
b=s(1-q).
\]
On the set $A$ where the reflected difference is negative, the secant coefficient can blow up at most like $\delta^{-b}$. We prove that
\begin{equation}
\int_A\delta^{-b}\phi^2\,dx\le C|A|^{(2s-b)/N}\mathcal E_s(\phi,\phi).
\tag{1.5}
\end{equation}
When $q=p-1$, one has $2s-b=sp$, so the singular term can be absorbed whenever the negative set is sufficiently narrow.

The second estimate treats an orthogonal corner. Set
\begin{equation}
\rho=s(1+q)=sp.
\tag{1.6}
\end{equation}
If $\rho>1$, higher-order boundary Schauder estimates directly give $u/\delta^s\in C^{1,\varepsilon}$. If $\rho\le1$, the boundary quotient generally contains normal powers of order below one. We prove that there exist constants $A_k$ and $H\in C^{1,\varepsilon}$ such that
\begin{equation}
\frac{u}{\delta^s}=H+\sum_{\substack{k\ge1\\k\rho<1}}A_k\delta^{k\rho}.
\tag{1.7}
\end{equation}
If there exists a positive integer $M$ such that $M\rho=1$, the unique resonant term $A_*\delta\log\delta$ must also be added to the right-hand side. The coefficients of all lower-order terms are constant along the entire boundary and therefore cancel under tangential reflection. This is more precise than requiring a priori that the entire boundary quotient belong to $C^1$.

The organization of the paper closely follows the order of the argument in~[1]. Section~2 gives the variational problem, uniqueness, and the shape derivative; Section~3 proves the singular narrow-domain maximum principle; Section~4 establishes power formulas at a curved boundary; Section~5 completes the finite boundary expansion; Section~6 proves tangential vanishing; and Section~7 uses moving planes to prove the overdetermined rigidity theorem and the main theorem.

\section{The Variational Problem and the Overdetermined Condition}

\subsection{The energy space and the Euler--Lagrange equation}

Define the bilinear form
\begin{equation}
\mathcal E_s(u,v)=\frac{c_{N,s}}{2}\iint_{\mathbb R^{2N}}
\frac{(u(x)-u(y))(v(x)-v(y))}{|x-y|^{N+2s}}\,dx\,dy.
\tag{2.1}
\end{equation}
Thus $[u]_s^2=\mathcal E_s(u,u)$. When $1<p<2$, compact embedding ensures that the infimum in (1.3) is attained by a nonnegative function. Taking the absolute value does not increase the energy, and the strong maximum principle then implies that the minimizer is strictly positive in $\Omega$. The standard constrained variation gives
\begin{equation}
\mathcal E_s(u,\varphi)=\lambda_{s,p}(\Omega)\int_\Omega u^{p-1}\varphi\,dx,
\qquad \varphi\in\mathcal H_0^s(\Omega).
\tag{2.2}
\end{equation}

\medskip
\noindent\textbf{Lemma 2.1 (Uniqueness of the positive minimizer).}
If $1<p\le2$, then the positive minimizer in (1.3) is unique.

\smallskip
\noindent\textit{Proof.}
Let $u,v$ be two positive $L^p$-normalized minimizers. For $t\in(0,1)$, set
\[
\sigma_t(x)=\bigl((1-t)u(x)^p+tv(x)^p\bigr)^{1/p}.
\]
Clearly $\|\sigma_t\|_p=1$. For arbitrary $x,y$, regard
\[
U_x=\bigl((1-t)^{1/p}u(x),\,t^{1/p}v(x)\bigr)\in\mathbb R^2
\]
as an $\ell^p$ vector, so that $\sigma_t(x)=\|U_x\|_{\ell^p}$. By the reverse triangle inequality,
\begin{align*}
|\sigma_t(x)-\sigma_t(y)|^2
&\le\bigl((1-t)|u(x)-u(y)|^p+t|v(x)-v(y)|^p\bigr)^{2/p}\\
&\le(1-t)|u(x)-u(y)|^2+t|v(x)-v(y)|^2.
\end{align*}
The last step is the power-mean inequality on the probability measure $\{1-t,t\}$; equivalently, it follows by applying Jensen's inequality to the convex function $r\mapsto r^{2/p}$. Integrating with respect to $x,y$ yields
\[
[\sigma_t]_s^2\le(1-t)[u]_s^2+t[v]_s^2=\lambda_{s,p}(\Omega).
\]
Consequently, both pointwise inequalities in the integrand are equalities almost everywhere. The equality conditions show that the vectors $U_x$ and $U_y$ are collinear for almost every pair $(x,y)$, and hence $u/v$ is constant. The $L^p$ normalizations of the two functions then give $u=v$. This also includes the Hilbert-space case $p=2$. \hfill$\square$

\subsection{The Hadamard formula}

Let $\Phi_\varepsilon$ be a $C^2$ family of diffeomorphisms, with $\Phi_0=\operatorname{id}$, and write
\[
X=\left.\partial_\varepsilon\right|_{\varepsilon=0}\Phi_\varepsilon,
\qquad \Omega_\varepsilon=\Phi_\varepsilon(\Omega).
\]
The following shape derivative formula is the specialized form of~[1, Theorem 1.1 and Corollary 1.2]. Here we take $\nu$ to be the inward unit normal; if the outward unit normal is used instead, a minus sign must be placed in front of the integral on the right-hand side.

\medskip
\noindent\textbf{Theorem 2.2 (Fractional Hadamard formula).}
Let $\Omega$ be a $C^{1,1}$ domain, and suppose that $\lambda_{s,p}(\Omega)$ has a unique positive minimizer $u$. Then
\begin{equation}
\left.\frac{d}{d\varepsilon}\right|_{0}\lambda_{s,p}(\Omega_\varepsilon)
=\Gamma(1+s)^2\int_{\partial\Omega}\left(\frac{u}{\delta^s}\right)^2 X\cdot\nu\,d\sigma.
\tag{2.3}
\end{equation}

\medskip
\noindent\textbf{Proposition 2.3 (Shape minimality implies the overdetermined condition).}
Under the assumptions of Theorem 1.1, the unique positive minimizer $u$ satisfies
\begin{equation}
\frac{u}{\delta^s}=C_0>0\qquad\text{on }\partial\Omega.
\tag{2.4}
\end{equation}

\smallskip
\noindent\textit{Proof.}
By Lemma 2.1, the Hadamard formula applies. Fix an arbitrary $h\in C^2(\partial\Omega)$ such that
\[
\int_{\partial\Omega}h\,d\sigma=0.
\]
Extend $h\nu$ to a $C^2$ vector field in a neighborhood of the boundary and make a second-order volume correction. One can then construct a family of diffeomorphisms that preserves volume exactly and whose initial velocity satisfies $X\cdot\nu=h$. Local minimality holds for both positive and negative values of the parameter, so the first derivative vanishes. By (2.3),
\[
\int_{\partial\Omega}\left(\frac{u}{\delta^s}\right)^2h\,d\sigma=0
\]
for every zero-mean $h$. Therefore $(u/\delta^s)^2$ is constant on the boundary. The boundary Hopf principle gives $u/\delta^s>0$, and hence (2.4) follows. \hfill$\square$

\section{A Singular Narrow-Domain Maximum Principle}

Throughout this section, let $q=p-1\in(0,1)$ and set
\begin{equation}
a=sq=s(p-1),\qquad b=s(1-q)=s(2-p),\qquad \rho=s+a=sp.
\tag{3.1}
\end{equation}
By boundary regularity, (2.4), and interior positivity, there exists a constant $m_0>0$ such that
\begin{equation}
u(x)\ge m_0\delta(x)^s,\qquad x\in\Omega.
\tag{3.2}
\end{equation}

\subsection{A weighted support estimate}

\medskip
\noindent\textbf{Lemma 3.1.}
Let $D\subset\Omega$ be measurable, and let $\phi\in\mathcal H_0^s(\Omega)$ satisfy $\phi=0$ on $\Omega\setminus D$. Then
\begin{equation}
\int_D\delta(x)^{-b}\phi(x)^2\,dx
\le C|D|^{(2s-b)/N}\mathcal E_s(\phi,\phi)
=C|D|^{sp/N}\mathcal E_s(\phi,\phi).
\tag{3.3}
\end{equation}
The constant depends only on $N,s,p$ and the Lipschitz character of $\Omega$.

\smallskip
\noindent\textit{Proof.}
Since $0<b<s$, one can choose
\begin{equation}
\frac b2<t<\min\left\{s,\frac12\right\}.
\tag{3.4}
\end{equation}
The fractional Hardy inequality of order $t<1/2$ (with $\phi$ extended by zero on $\Omega^c$) gives
\begin{equation}
\int_\Omega\frac{\phi^2}{\delta^{2t}}\,dx
\le C[\phi]_{\dot H^t(\mathbb R^N)}^2.
\tag{3.5}
\end{equation}
H\"older's inequality and (3.5), in succession, give
\begin{align}
\int_D\delta^{-b}\phi^2
&\le\left(\int_\Omega\delta^{-2t}\phi^2\right)^{b/(2t)}
\left(\int_D\phi^2\right)^{1-b/(2t)}\notag\\
&\le C[\phi]_{\dot H^t}^{b/t}\|\phi\|_2^{2-b/t}.
\tag{3.6}
\end{align}
Interpolation between the homogeneous Sobolev scale and $L^2$ gives
\[
[\phi]_{\dot H^t}^2
\le C\mathcal E_s(\phi,\phi)^{t/s}\|\phi\|_2^{2(1-t/s)}.
\]
Substituting this into (3.6) and collecting the exponents yields
\begin{equation}
\int_D\delta^{-b}\phi^2
\le C\mathcal E_s(\phi,\phi)^{b/(2s)}\|\phi\|_2^{2-b/s}.
\tag{3.7}
\end{equation}
The fractional Faber--Krahn inequality gives, by scaling,
\begin{equation}
\|\phi\|_2^2\le C|D|^{2s/N}\mathcal E_s(\phi,\phi).
\tag{3.8}
\end{equation}
Substitution of (3.8) into (3.7) gives the exponent of the measure
\[
\frac{2s}{N}\left(1-\frac{b}{2s}\right)=\frac{2s-b}{N}.
\]
Finally, (3.1) gives $2s-b=sp$, and the conclusion follows. \hfill$\square$

\subsection{An estimate for the antisymmetric negative part}

Let $H$ be a half-space, and let $Q$ be reflection across $\partial H$. A function $w$ is called antisymmetric if
\[
w(Qx)=-w(x).
\]

\medskip
\noindent\textbf{Lemma 3.2 (Singular narrow-domain principle).}
Let $D\subset H\cap\Omega$ be open, and suppose that its reflection $Q(D)$ is contained in $\Omega$. Let $w\in H^s(\mathbb R^N)$ be antisymmetric with respect to $Q$, suppose that $w\ge0$ on $H\setminus D$, and suppose that $w$ weakly satisfies
\begin{equation}
(-\Delta)^s w=c(x)w\qquad\text{in }D.
\tag{3.9}
\end{equation}
If, on the negative set $A=\{x\in D:w(x)<0\}$,
\begin{equation}
c_+(x)\le C_0\bigl(\delta(x)^{-b}+\delta(Qx)^{-b}\bigr),
\tag{3.10}
\end{equation}
then there exists $\varepsilon_0>0$ such that $w\ge0$ in $D$ whenever $|A|<\varepsilon_0$.

\smallskip
\noindent\textit{Proof.}
Let $\phi=w^-\mathbf 1_H$, extended by zero outside $H$. Since $w\in H^s(\mathbb R^N)$, $w$ is antisymmetric with respect to $Q$, and $w\ge0$ on $H\setminus D$, the antisymmetric negative-part truncation result gives
\[
\phi\in\mathcal H_0^s(D),\qquad \operatorname{supp}\phi\subset\overline A.
\]
This conclusion holds for the full range $0<s<1$. Its proof splits the fractional energy into the four parts $H\times H$, $H\times Q(H)$, and so on, and uses $|r^- -t^-|\le|r-t|$ together with a comparison of the reflected kernels; see~[3, proof of Proposition 3.1]. Therefore $\phi$ may be used as a test function in (3.9).

We now write out the antisymmetric energy calculation. Set
\[
K(x,y)=|x-y|^{-N-2s}.
\]
For $x,y\in H$, one has $|x-y|<|Qx-y|$. Split $\mathbb R^N\times\mathbb R^N$ into the four blocks determined by $H$ and $Q(H)$, and make the reflection change of variables in the integrals containing $Q(H)$. Since $w=w^+-\phi$ in $H$ and $w^+\phi=0$, direct expansion gives
\begin{align}
\mathcal E_s(w,\phi)+\mathcal E_s(\phi,\phi)
&=-c_{N,s}\iint_{H\times H}\phi(y)
\Bigl[w^+(x)\bigl(K(x,y)-K(Qx,y)\bigr)\notag\\
&\hspace{10em}+\phi(x)K(Qx,y)\Bigr],dx,dy\le0.
\tag{3.11}
\end{align}
Consequently,
\begin{equation}
\mathcal E_s(\phi,\phi)\le-\mathcal E_s(w,\phi).
\tag{3.12}
\end{equation}
On the other hand, testing (3.9) with $\phi$ and using $w=-\phi$ on $A$, we obtain
\begin{equation}
-\mathcal E_s(w,\phi)=\int_A c(x)\phi^2\,dx
\le\int_A c_+(x)\phi^2\,dx.
\tag{3.13}
\end{equation}
Apply Lemma 3.1 to the first term in (3.10). To treat the second term, define
\[
\widetilde\phi(y)=\phi(Qy).
\]
Since reflection is a Euclidean isometry, $\phi\in\mathcal H_0^s(D)$ directly implies $\widetilde\phi\in\mathcal H_0^s(Q(D))$. By the assumption $Q(D)\subset\Omega$ and the definition by zero extension, $\mathcal H_0^s(Q(D))\subset\mathcal H_0^s(\Omega)$. Reflection also preserves Lebesgue measure and the fractional energy. Thus, after the change of variables $y=Qx$,
\begin{align*}
\int_A\delta(Qx)^{-b}\phi(x)^2\,dx
&=\int_{Q(A)}\delta(y)^{-b}\widetilde\phi(y)^2\,dy\\
&\le C|Q(A)|^{sp/N}\mathcal E_s(\widetilde\phi,\widetilde\phi)\\
&=C|A|^{sp/N}\mathcal E_s(\phi,\phi).
\end{align*}
Combining this with the first term gives
\[
\mathcal E_s(\phi,\phi)\le CC_0|A|^{sp/N}\mathcal E_s(\phi,\phi).
\]
If $\varepsilon_0$ is chosen so that $CC_0\varepsilon_0^{sp/N}<1$, then $\mathcal E_s(\phi,\phi)=0$, and hence $\phi=0$. \hfill$\square$

\medskip
\noindent\textbf{Proposition 3.3 (Initiation and continuation of moving planes).}
For a solution of (1.4), the moving plane in any direction can be initiated from outside the domain and can be moved continuously up to the first geometric critical position; before that position, the reflected difference is nonnegative.

\smallskip
\noindent\textit{Proof.}
Fix a unit vector $e$, and set
\[
\lambda^+=\sup_{x\in\Omega}x\cdot e,
\qquad H_\lambda=\{x:x\cdot e>\lambda\},
\qquad \Sigma_\lambda=\Omega\cap H_\lambda.
\]
Let $Q_\lambda$ be reflection across $\partial H_\lambda$. The first geometric critical value is defined by
\[
\lambda_*=\inf\bigl\{\lambda<\lambda^+:Q_\mu(\Sigma_\mu)\subset\Omega
\text{ for every }\mu\in(\lambda,\lambda^+)\bigr\}.
\]
For $\lambda\in(\lambda_*,\lambda^+)$, define throughout $H_\lambda$
\[
w_\lambda(x)=u(Q_\lambda x)-u(x).
\]
If $x\in H_\lambda\setminus\Sigma_\lambda$, then $u(x)=0$, and hence $w_\lambda(x)=u(Q_\lambda x)\ge0$. Thus the negative set is entirely contained in $\Sigma_\lambda$.

For $x\in\Sigma_\lambda$, both $x$ and $Q_\lambda x$ belong to $\Omega$, so we may define
\[
c_\lambda(x)=\Lambda q\int_0^1
\bigl((1-t)u(x)+tu(Q_\lambda x)\bigr)^{q-1}\,dt.
\]
The fundamental theorem of calculus gives
\[
(-\Delta)^s w_\lambda=c_\lambda(x)w_\lambda
\qquad\text{in }\Sigma_\lambda.
\]
At points where $w_\lambda<0$, one has $0<u(Q_\lambda x)<u(x)$. By the concavity of $r\mapsto r^q$,
\[
c_\lambda(x)\le\Lambda q\,u(Q_\lambda x)^{q-1}
\le C\delta(Q_\lambda x)^{-s(1-q)},
\]
where the last step uses (3.2). Hence (3.10) holds.

As $\lambda\uparrow\lambda^+$, one has $|\Sigma_\lambda|\to0$. Choose $|\Sigma_\lambda|<\varepsilon_0$; Lemma 3.2 then gives $w_\lambda\ge0$, so the moving plane can be initiated.

We next prove continuation by an open-and-closed argument. Let $\lambda_0>\lambda_*$ and suppose that $w_\lambda\ge0$ for $\lambda\in[\lambda_0,\lambda^+)$. If $w_{\lambda_0}\not\equiv0$, then in $\Sigma_{\lambda_0}$,
\[
(-\Delta)^s w_{\lambda_0}
=\Lambda\bigl(u(Q_{\lambda_0}x)^q-u(x)^q\bigr)\ge0.
\]
Applying the antisymmetric strong maximum principle here with $c=0$ gives $w_{\lambda_0}>0$ in $\Sigma_{\lambda_0}$. By interior regularity, $w_{\lambda_0}$ is continuous. First choose a compact set $K\Subset\Sigma_{\lambda_0}$ such that
\[
|\Sigma_{\lambda_0}\setminus K|<\frac{\varepsilon_0}{4},
\]
and then use strict positivity on $K$ to choose $\eta>0$ such that $w_{\lambda_0}\ge2\eta$ on $K$. The uniform continuity of $u$ and the convergence $Q_\lambda\to Q_{\lambda_0}$ imply that, when $\lambda<\lambda_0$ is sufficiently close to $\lambda_0$,
\[
K\subset\Sigma_\lambda,
\qquad w_\lambda\ge\eta\quad\text{on }K.
\]
Moreover, since $\partial\Omega$ has measure zero, the cap depends continuously on the parameter in measure; after shrinking the parameter interval, one may also arrange that
\[
|\Sigma_\lambda\mathbin{\triangle}\Sigma_{\lambda_0}|<\frac{\varepsilon_0}{2}.
\]
Thus the possible negative set $A_\lambda$ is contained in $\Sigma_\lambda\setminus K$, and
\[
|A_\lambda|
\le|\Sigma_{\lambda_0}\setminus K|
+|\Sigma_\lambda\mathbin{\triangle}\Sigma_{\lambda_0}|
<\varepsilon_0.
\]
Lemma 3.2 again gives $w_\lambda\ge0$.

It remains to rule out the premature occurrence of symmetry when $\lambda_0>\lambda_*$. If $w_{\lambda_0}\equiv0$, then $u$ and $\Omega$ are symmetric with respect to $\partial H_{\lambda_0}$. Set
\[
\lambda^-=\inf_{x\in\Omega}x\cdot e.
\]
Symmetry gives $\lambda^-=2\lambda_0-\lambda^+$. If $\lambda_0>\lambda_*$, choose $\mu\in(\lambda_*,\lambda_0)$ and then choose $x_n\in\Omega$ such that $x_n\cdot e\to\lambda^+$. For all sufficiently large $n$, one has $x_n\in\Sigma_\mu$, but
\[
Q_\mu x_n\cdot e=2\mu-x_n\cdot e
\longrightarrow2\mu-\lambda^+
<2\lambda_0-\lambda^+=\lambda^-.
\]
Thus $Q_\mu x_n\notin\Omega$, contradicting $Q_\mu(\Sigma_\mu)\subset\Omega$. Therefore $w_{\lambda_0}\equiv0$ necessarily implies $\lambda_0=\lambda_*$; when $\lambda_0>\lambda_*$, only strict positivity is possible, and the preceding narrow-domain argument permits the plane to continue moving inward. Hence the moving-plane process continues precisely up to the first geometric critical position. \hfill$\square$

\section{Curved-boundary powers and resonant logarithmic terms}

This section establishes the local formulas needed for the finite expansion below. Let $d$ be a $C^3$ defining function: in a tubular neighborhood of the boundary, $d$ agrees with the signed distance (positive inside the domain), and it is extended smoothly away from the boundary. We write $d_+=\max\{d,0\}$. Let $\chi$ be a local cutoff that equals $1$ near the boundary point under consideration.

\subsection{One-dimensional coefficients}

For $s<\alpha<1$, define
\begin{equation}
  \kappa_s(\alpha)
  =\frac{\Gamma(s+\alpha+1)\sin(\pi\alpha)}
  {\Gamma(\alpha-s+1)\sin(\pi(\alpha-s))}.
  \tag{4.1}\label{eq:4.1}
\end{equation}
A one-dimensional distributional computation gives
\begin{equation}
  (-\partial_t^2)^s\bigl(t_+^{s+\alpha}\bigr)
  =\kappa_s(\alpha)t^{\alpha-s},
  \qquad t>0.
  \tag{4.2}\label{eq:4.2}
\end{equation}
The formula may first be computed by a Beta integral in a strip of complex parameters where the integral converges absolutely, and then analytically continued. Since $\alpha-s\in(0,1-s)$, the denominator has no zeros; hence $\kappa_s(\alpha)\ne0$. More precisely, the unique zero of $\kappa_s$ in $(s,1]$ is $\alpha=1$, this zero is simple, and
\begin{equation}
  \kappa_s'(1)
  =-\frac{\pi\Gamma(s+2)}{\Gamma(2-s)\sin(\pi s)}\ne0.
  \tag{4.3}\label{eq:4.3}
\end{equation}
Differentiating \eqref{eq:4.2} with respect to $\alpha$ yields
\begin{equation}
  (-\partial_t^2)^s\bigl(t_+^{s+1}\log t_+\bigr)
  =\kappa_s'(1)t^{1-s},
  \qquad t>0.
  \tag{4.4}\label{eq:4.4}
\end{equation}
Here we adopt the convention that $t_+^{s+1}\log t_+=0$ for $t\le0$, and the identity is understood in the sense of distributions.

\subsection{Finite-regularity changes of coordinates}

We use the finite-regularity pseudodifferential-operator change-of-coordinates theorem of Abels--Grubb~[5, Theorems~3.4, 3.8, 3.9 and Corollary~3.5], whose specialized form needed here is the following: if $\Phi$ is a $C^3$ diffeomorphism and $P$ is a classical even operator of order $2s$, then, after removal of the low-frequency part,
\begin{equation}
  \Phi^*P(\Phi^{-1})^*=Q_0+Q_1+R_1+S.
  \tag{4.5}\label{eq:4.5}
\end{equation}
Here the order of the diagonal expansion in Theorem~3.8 is taken to be $1$: the principal symbol of $Q_0$ is the transformed principal symbol of order $2s$, $Q_1$ is the first diagonal term, and $R_1$ is the double-symbol remainder vanishing on the diagonal; the latter two have order at most $2s-1$. Theorem~3.4 and Corollary~3.5 provide the required local Sobolev mappings within their restrictions on the exponents; below we verify these restrictions term by term for the specific target exponents. The coordinate remainder $S$ in Theorem~3.9 maps $L^r$ into every $H_r^\sigma$ with $\sigma<2-2s$. The low-frequency part, when applied to a compactly supported distribution, is locally smooth. Below, $C_*^\theta=B_{\infty,\infty}^\theta$ denotes the H\"older--Zygmund space; when $\theta\notin\mathbb N$, it is the usual H\"older space $C^\theta$.

\begin{lemma}[Weak curved-boundary power formula]\label{lem:4.1}
Let $s<\alpha<1$. Then, in every boundary coordinate patch,
\begin{equation}
  (-\Delta)^s\bigl(\chi d_+^{s+\alpha}\bigr)
  =\kappa_s(\alpha)d^{\alpha-s}+G_\alpha,
  \tag{4.6}\label{eq:4.6}
\end{equation}
where
\begin{equation}
  G_\alpha\in C_*^\theta
  \qquad\text{for every}\qquad
  \theta<\min\{1,1+\alpha-s,2-2s\}.
  \tag{4.7}\label{eq:4.7}
\end{equation}
At $\alpha=1$,
\begin{equation}
  (-\Delta)^s\bigl(\chi d_+^{s+1}\log d_+\bigr)
  =\kappa_s'(1)d^{1-s}+G_{\log},
  \tag{4.8}\label{eq:4.8}
\end{equation}
and $G_{\log}\in C_*^\theta$ for every $\theta<\min\{1,2-2s\}$.
\end{lemma}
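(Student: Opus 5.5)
We flatten the boundary and reduce to the one-dimensional computation (4.2). Fix a boundary point and a $C^3$ diffeomorphism $\Phi$ defined near it that sends the patch to a neighborhood of the origin in the half-space $\{x_N>0\}$ with $d\circ\Phi^{-1}(x',x_N)=x_N$. This is possible because $d$ is $C^3$ and $|\nabla d|=1$ near $\partial\Omega$; we take, for instance, $\Phi^{-1}(x',t)=\gamma(x')+t\nu(\gamma(x'))$ corrected so that $d\circ\Phi^{-1}=t$. In the new coordinates the function becomes $\tilde\chi(x)\,(x_N)_+^{s+\alpha}$. We conjugate $(-\Delta)^s$ by $\Phi$ and use (4.5) to write the result as $(Q_0+Q_1+R_1+S)$ applied to this function, plus a locally smooth low-frequency part. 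The plan is to show that the principal part $Q_0$ produces $\kappa_s(\alpha)x_N^{\alpha-s}$ plus an acceptable error, and that every other piece lands in $C_*^\theta$ for the stated $\theta$.

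\textbf{The principal part.} The symbol of $Q_0$ is $|(D\Phi)^{-T}\xi|^{2s}$, a $C^2$-in-$x$ family of even homogeneous symbols. We freeze $x'$ at the base point. The Jacobian of the normal coordinate makes the frozen symbol restricted to the conormal direction equal to $|\xi_N|^{2s}$ when evaluated on functions of $x_N$ alone. More precisely, for a homogeneous even symbol $a(\xi)$ applied to $g(x_N)$, one gets $a(e_N)\,(-\partial_t^2)^s g$. Here $a(e_N)=|(D\Phi)^{-T}e_N|^{2s}=|\nabla d|^{2s}=1$ at the boundary. Hence (4.2) gives exactly $\kappa_s(\alpha)x_N^{\alpha-s}$, where the cutoff only contributes a smooth term through commutators.

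The errors in the principal part come from three sources. First, the $x$-dependence of the symbol, for which the $x'$-variation is Lipschitz and the $x_N$-variation is $O(x_N)$. Second, the variation of $a(e_N)$ off the boundary, which is $1+O(x_N)$ since $|\nabla d|=1$ throughout the tube. Third, the tangential cutoff. Each error is a commutator of order $2s-1$ with a $C^2$ or $C^1$ coefficient acting on $(x_N)_+^{s+\alpha}\in H_r^{s+\alpha+1/r-\epsilon}$. Locally it lies in $B^{1+\alpha-s-\epsilon}$, hence in $C_*^\theta$ for $\theta<1+\alpha-s$, and this is capped by the coefficient regularity at $\theta<1$. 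Returning to the original coordinates by $\Phi$ ($C^3$) preserves $C_*^\theta$ for $\theta<1$ and maps $x_N^{\alpha-s}$ to $d^{\alpha-s}$.

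\textbf{The lower-order terms.} $Q_1$ and $R_1$ have order at most $2s-1$. Applied to a function in $B^{s+\alpha+1/r-\epsilon}_{r,\cdot}$ (all large $r$), they give $B^{1+\alpha-s+1/r-\epsilon}$, and Sobolev embedding with $r\to\infty$ gives $C_*^\theta$ with $\theta<1+\alpha-s$. We will check the exponent restrictions of Theorem 3.4 and Corollary 3.5 of [5] for these target values, since the coefficients are only $C^1$ after two derivatives of $\Phi$. This is what truncates at $\theta<1$. $S$ maps $L^r$ into $H_r^\sigma$ for $\sigma<2-2s$, so embedding gives $C_*^\theta$ for $\theta<2-2s$. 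Taking the minimum yields (4.7).

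\textbf{The logarithmic identity (4.8).} We differentiate (4.6) in $\alpha$ at $\alpha=1$. The left side is analytic in $\alpha$ as a distribution-valued map, and the $\alpha$-derivative of $(x_N)_+^{s+\alpha}$ is $(x_N)_+^{s+\alpha}\log(x_N)_+$. Since $\kappa_s(1)=0$, the principal part becomes $\kappa_s'(1)d^{1-s}$ by (4.3)–(4.4). The main obstacle is to show that the bounds on $G_\alpha$ are uniform and differentiable in $\alpha$ near $1$ (approached from $\alpha<1$, or using the formula's analytic continuation to a neighborhood). The plan is to rerun the mapping estimates with $(x_N)_+^{s+1}\log(x_N)_+\in B^{s+1+1/r-\epsilon}$ directly. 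This gives $G_{\log}\in C_*^\theta$ for $\theta<\min\{1,2-2s\}$, the bound $1+\alpha-s$ being inactive at $\alpha=1$.
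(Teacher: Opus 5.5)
Your proof is correct in substance, but it flattens the boundary differently from the paper, and that changes where the work sits in the principal part.

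The paper uses graph coordinates $\Phi(z,t)=(z,\varphi(z)+t)$. Because $D\Phi$ does not depend on $t$, the conormal symbol is $a(z)^{-2s}|\tau|^{2s}$. The price is that the transported function is not a pure power, since $d\circ\Phi=tA$ with $A\in C^2$. The paper therefore has to:
\begin{itemize}
\item commute $Q_0$ past the $C^2$ multiplier $A^{\mu}$, which is an $x,y$-form operator of order $2s-1$ with $C^1$ amplitude;
\item compare $\kappa_s(\alpha)A^{\mu}a^{-2s}t^{\mu-2s}$ with $d^{\alpha-s}=A^{\mu-2s}t^{\mu-2s}$, an $O(t^{1+\alpha-s})$ discrepancy;
\item in the logarithmic case, split off $A^{s+1}\log A\,t^{s+1}$ and kill its leading term using $\kappa_s(1)=0$.
\end{itemize}

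You instead make the last coordinate equal to $d$. The transported function is then a function of $y_N$ alone times a cutoff. Moreover, $(D\Phi)^{T}e_N=\nabla\Phi_N=\nabla d$ has length one on the whole tube, so the conormal restriction of the $x$-form principal symbol is exactly $|\eta_N|^{2s}$ at every point. Hence $Q_0$ returns $\kappa_s(\alpha)y_N^{\alpha-s}$ (respectively $\kappa_s'(1)y_N^{1-s}$ by (4.4)) up to the tangential-cutoff commutator, and all three steps above disappear. Your treatment of $Q_1$, $R_1$, $S$ and the exponent bookkeeping coincides with the paper's. So does your final plan for (4.8): apply the decomposition directly to the logarithmic power rather than differentiate (4.6), which is only stated for $\alpha<1$.

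Two points need correcting.

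\emph{The flattening map must be $C^3$.} Your sample map $\gamma(x')+t\nu(\gamma(x'))$ is only $C^2$, because $\nu\in C^2$. Then $D\Phi\in C^1$, and the regularity parameter $1$ is not larger than $2s$ when $s\ge 1/2$. Take instead $\Phi(x)=(x',d(x))$. It is $C^3$ because $d\in C^3$ for a $C^3$ boundary, and it still has $\Phi_N=d$.

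\emph{Your first two ``error sources'' in the principal part are misdescribed.} They are not commutators, and the reason you give (Lipschitz $x'$-variation, $O(x_N)$ normal variation) would not make them lower order. Freezing a nonconstant conormal coefficient $c(x')$ at the base point would leave $(c(x')-c(0))\kappa_s(\alpha)y_N^{\alpha-s}$, which is only $C^{\alpha-s}$ in the normal direction. This is exactly the kind of term the paper must match against $d^{\alpha-s}$. In your coordinates these terms vanish identically, since no freezing is needed for an $x$-form operator acting on a function of $y_N$. So the only genuine error is the cutoff commutator, which you handle as the paper handles $[Q_0,A^{\mu}]$.
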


\begin{proof}
Fix a boundary point and translate and rotate the coordinates so that the local region is written as
\[
  \Omega=\{(z,x_N):x_N>\varphi(z)\},
  \qquad \varphi\in C^3.
\]
Use the graph coordinates
\begin{equation}
  \Phi(z,t)=(z,\varphi(z)+t).
  \tag{4.9}\label{eq:4.9}
\end{equation}
After multiplication by a tangential cutoff on a smaller coordinate patch, $\varphi$ can be extended to the whole space in such a way that $\Phi$ extends to a global $C^3$ diffeomorphism equal to the identity outside a large ball; the following local computation is unaffected by this extension. Thus $D\Phi\in C^2$, and the regularity parameter in the Abels--Grubb theorem can be taken to be $\tau=2>2s$. Set
\[
  D(z,t)=d(\Phi(z,t)).
\]
Since $D(z,0)=0$, Taylor's formula gives
\begin{equation}
  D(z,t)=tA(z,t),
  \qquad A\in C^2,
  \qquad a(z):=A(z,0)=\frac{1}{\sqrt{1+|\nabla\varphi(z)|^2}}.
  \tag{4.10}\label{eq:4.10}
\end{equation}

First decompose $|\xi|^{2s}$ into high- and low-frequency parts. The low-frequency term and the cutoff terms whose supports are separated from the observation point are locally smooth. Apply \eqref{eq:4.5} to the high-frequency part. Its principal symbol is
\[
  q_0(z,t,\xi)=\bigl|(D\Phi(z,t))^{-T}\xi\bigr|^{2s}.
\]
Because the differential of the graph coordinates \eqref{eq:4.9} is independent of $t$, at purely normal frequencies one has, for every $t$,
\begin{equation}
  q_0(z,t,0,\tau)=a(z)^{-2s}|\tau|^{2s}.
  \tag{4.11}\label{eq:4.11}
\end{equation}

Let $\mu=s+\alpha$ and $f(t)=\chi(t)t_+^\mu$, and denote multiplication by $A^\mu$ by $M$. By \eqref{eq:4.10}, the principal term is $Q_0(Mf)$. Decompose the commutator as
\begin{equation}
  Q_0(Mf)=M Q_0f+[Q_0,M]f.
  \tag{4.12}\label{eq:4.12}
\end{equation}
Here one cannot regard $M=A^\mu\in C^2$ as a smooth multiplier and directly invoke the smooth commutator formula. Write
\[
  M(y)-M(x)
  =(y-x)\cdot\int_0^1\nabla M\bigl(x+\theta(y-x)\bigr)\,d\theta.
\]
Integrating once by parts in $\xi$ after applying the factor $y-x$ to the oscillatory exponential shows that $[Q_0,M]$ is an $x,y$-form double-symbol operator: by~[5, Lemma~3.2], its amplitude is a finite linear combination of $\partial_\xi q_0$ and the integral coefficient in the preceding display, and therefore has order $2s-1$ and coefficients of regularity $C^1$. Since $|2s-1|<1$, [5, Theorem~3.4] gives exactly the local Sobolev mapping stated above. The commutator with the tangential cutoff is treated by the same computation; the part whose support is separated from the observation point is smooth by pseudolocality. They are therefore included, together with $Q_1$, $R_1$, and $S$, in the lower-order remainder. Since $f$ is independent of the tangential variables, \eqref{eq:4.11} and the one-dimensional formula \eqref{eq:4.2} give
\begin{equation}
  M Q_0f
  =\kappa_s(\alpha)A^\mu a^{-2s}t^{\mu-2s}
  +\text{lower-order terms}.
  \tag{4.13}\label{eq:4.13}
\end{equation}
On the other hand,
\[
  d^{\alpha-s}=D^{\mu-2s}=A^{\mu-2s}t^{\mu-2s}.
\]
Moreover, since $A(z,t)=a(z)+O(t)$,
\[
  A^\mu a^{-2s}-A^{\mu-2s}
  =A^{\mu-2s}\left[\left(\frac{A}{a}\right)^{2s}-1\right]
  =O(t).
\]
Consequently, the difference between \eqref{eq:4.13} and $\kappa_s(\alpha)d^{\alpha-s}$ is
\begin{equation}
  O\bigl(t^{1+\alpha-s}\bigr).
  \tag{4.14}\label{eq:4.14}
\end{equation}

It remains only to verify the function-space exponents. With a tangential cutoff, $t_+^\mu$ belongs to
\[
  H_r^\gamma
  \qquad\text{for every }\gamma<\mu+\frac1r.
\]
Now fix
\[
  \theta<\min\{1,1+\alpha-s,2-2s\}.
\]
Choose $r$ sufficiently large and then choose $\sigma_0$ such that
\[
  \theta+\frac Nr<\sigma_0
  <\min\left\{1,2-2s,1+\alpha-s+\frac1r\right\}.
\]
Then
\[
  |\sigma_0|<1,
  \qquad |\sigma_0+2s-1|<1,
  \qquad \sigma_0+2s-1<\mu+\frac1r.
\]
Theorem~3.4 therefore maps the commutator and the other terms of order $2s-1$ into $H_r^{\sigma_0}$, Corollary~3.5 gives the same conclusion for the remainder vanishing on the diagonal, and $\sigma_0<2-2s$ also satisfies the upper bound for the coordinate remainder. Finally, the embedding
\[
  H_r^{\sigma_0}\hookrightarrow C_*^{\sigma_0-N/r}
\]
gives \eqref{eq:4.7}. It also controls \eqref{eq:4.14}.

The logarithmic case is handled using
\begin{equation}
  D^{s+1}\log D
  =A^{s+1}t^{s+1}(\log t+\log A).
  \tag{4.15}\label{eq:4.15}
\end{equation}
For the first term, differentiate the flat one-dimensional analytic family at $\alpha=1$ and use \eqref{eq:4.3}; the leading coefficient of the ordinary power in the second term vanishes because $\kappa_s(1)=0$. The remaining commutators and coordinate remainders are still operators of order $2s-1$, so the same mapping estimates apply directly, without differentiating a parameter through the curved-boundary principal-value integral. This proves \eqref{eq:4.8}.
\end{proof}

\begin{lemma}[Base boundary power]\label{lem:4.2}
If $\Omega$ is $C^3$, then, for all sufficiently small $\varepsilon>0$,
\begin{equation}
  (-\Delta)^s\bigl(\chi d_+^s\bigr)\in C^{1+\varepsilon-s}
  \tag{4.16}\label{eq:4.16}
\end{equation}
in a neighborhood of the boundary.
\end{lemma}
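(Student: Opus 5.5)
We follow the scheme of the proof of Lemma~\ref{lem:4.1}, now with $\mu=s$, that is, formally $\alpha=0$. We work in the graph coordinates \eqref{eq:4.9} and write $D=tA$ as in \eqref{eq:4.10}, so that
\[
  \chi d_+^s=\chi A^s t_+^s = M f,\qquad M=A^s\in C^2,\qquad f=\chi(t)t_+^s .
\]
The decisive one-dimensional input is the classical identity $(-\partial_t^2)^s(t_+^s)=0$ for $t>0$. This is the $\alpha\to 0$ limit of \eqref{eq:4.2}, since $\kappa_s(\alpha)\propto\sin(\pi\alpha)\to0$ while $\sin(\pi(\alpha-s))\to-\sin(\pi s)\neq0$. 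With the cutoff present, $(-\partial_t^2)^s(\chi t_+^s)$ is therefore smooth on $\{t>0\}$ near $t=0$. More precisely, it equals $(-\partial_t^2)^s((\chi-1)t_+^s)$, and this is smooth near the origin because $\chi-1$ vanishes there.

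The plan is as follows. First, decompose $(-\Delta)^s$ into its low- and high-frequency parts, and apply \eqref{eq:4.5} to the high-frequency part. This gives
\[
  (-\Delta)^s(\chi d_+^s)=MQ_0f+[Q_0,M]f+(Q_1+R_1+S)(Mf)+\text{smooth}.
\]
Second, treat the principal term $MQ_0f$. Since $f$ does not depend on $z$ and, by \eqref{eq:4.11}, the symbol $q_0$ restricted to purely normal frequencies equals $a(z)^{-2s}|\tau|^{2s}$, we get $MQ_0f=A^s a^{-2s}(-\partial_t^2)^s f$ up to lower-order terms. By the one-dimensional vanishing above, this is locally smooth. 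Note that in the present case there is no leading power $d^{\alpha-s}$, because $\kappa_s(0)=0$. Third, control the remaining terms exactly as in the exponent bookkeeping of Lemma~\ref{lem:4.1}, now with $\mu=s$. The function $t_+^s$ (with cutoffs) lies in $H_r^\gamma$ for every $\gamma<s+1/r$. The commutator $[Q_0,M]$ and the terms $Q_1$, $R_1$ all have order $2s-1$ with $C^1$ coefficients, so they map $H_r^\gamma$ into $H_r^{\gamma-2s+1}$. Here $\gamma-2s+1$ can be taken as close to $1-s+1/r$ as we like. The coordinate remainder $S$ maps into $H_r^\sigma$ for every $\sigma<2-2s$. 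We choose $r$ large and $\sigma_0$ slightly below $\min\{1-s+1/r,\,2-2s,\,1\}$, and then apply the embedding $H_r^{\sigma_0}\hookrightarrow C_*^{\sigma_0-N/r}$. This gives membership in $C^{\theta}$ for every $\theta<1-s$, with the positive gain supplied only by the term $1/r-N/r$.

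The main obstacle is the last step. The claimed exponent $1+\varepsilon-s$ exceeds $1-s$, whereas the crude order count above yields only exponents $\theta<1-s$ (note $\min\{1,1+\alpha-s,2-2s\}$ at $\alpha=0$ is $1-s$). We therefore need to gain one extra derivative beyond the commutator count. My plan is to expand the order-$(2s-1)$ terms one level further rather than estimate them crudely. Each such term acting on $t_+^s$ is, to leading order, a symbol of the form $b(z,t)|\tau|^{2s-1}\operatorname{sgn}$-type acting in the normal variable. Its action on $t_+^s$ produces a constant multiple of $t^{1-s}$ (a one-dimensional Beta computation analogous to \eqref{eq:4.2}), multiplied by coefficients of class $C^1$. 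A term $c(z)t^{1-s}$ belongs to $C^{1-s}$ but in general not to $C^{1+\varepsilon-s}$. Consequently, the statement must be read either as membership near the boundary on the inside, $t>0$, in a weighted or piecewise sense, or as saying that these $t^{1-s}$ contributions are exactly the boundary-expansion terms that Section~5 absorbs. I would first try to show that in graph coordinates the $t^{1-s}$ coefficient produced by $[Q_0,M]$ cancels against the one from $Q_1$. The reason to expect this is that $\kappa_s$ has the extra vanishing factor $\sin(\pi\alpha)$ at $\alpha=0$, which suggests that all normal-order contributions of $t_+^s$ vanish, as they do for the half-space solution. The remaining terms would then be of order $2s-2$, where the exponent bound improves to $\min\{2-s,\,2-2s\}-0>1-s$. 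If this cancellation fails, I would fall back on the Grubb $\mu$-transmission regularity theory, which gives $(-\Delta)^s(d_+^s\cdot)$ an improved Hölder class on $C^3$ domains directly.
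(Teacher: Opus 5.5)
There is a genuine gap, and it is the one you flagged yourself. Your order count in the third step gives only $C^\theta$ for $\theta<1-s$, while \eqref{eq:4.16} needs the exponent $1+\varepsilon-s>1-s$. The extra derivative is never produced: the cancellation you propose is left as a hope.

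Your diagnosis of the order-$(2s-1)$ terms also points the wrong way, and it leads you to doubt a lemma that is true as stated.
\begin{itemize}
\item The principal symbol of $[Q_0,M]$ is a multiple of $\nabla_xM\cdot\nabla_\xi q_0$. This is \emph{odd} in $\xi$ because $q_0$ is even.
\item For an odd normal symbol, write $\operatorname{sgn}(\tau)|\tau|^{2s-1}=-i(i\tau+0)^{s}(-i\tau+0)^{s-1}$. Applying it to $t_+^s$, whose Fourier transform is $\Gamma(s+1)(i\tau+0)^{-s-1}$, leaves (modulo terms smooth near $t=0$) the right-sided operator $(-i\tau+0)^{s-1}$ acting on the Heaviside function.
\item That operator evaluated at $t$ only sees values on $[t,\infty)$. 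So the result is smooth up to $t=0$ from the side $t>0$, and the coefficient of $t^{1-s}$ is zero. This is the mechanism behind the $\sin(\pi\alpha)$ factor you noticed.
\item Only an \emph{even} symbol $|\tau|^{2s-1}$ would create a genuine $c\,t^{1-s}$ term.
\end{itemize}
So no weighted or piecewise reinterpretation of the lemma is needed.

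Even granting this parity cancellation, your argument would still be incomplete. The diagonal remainder $R_1$ in \eqref{eq:4.5} is only known to have order at most $2s-1$. Estimating it as in Lemma~\ref{lem:4.1} again caps the exponent below $1-s$. Your assertion that the remaining terms are of order $2s-2$ would need a second-order diagonal expansion, with control of all order-$(2s-2)$ terms under merely $C^2$ coefficients, and you do not carry this out. The fallback to Grubb's $\mu$-transmission theory is not a concrete step either. In its standard form that calculus is set up for $C^\infty$ domains, and you do not say which finite-regularity H\"older result you would invoke.

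The paper supplies exactly such a result and skips the pseudodifferential bookkeeping entirely. It quotes Abatangelo--Ros-Oton~[4, Corollary~2.3]: if $\partial\Omega\in C^\beta$ with $\beta>1+s$ and $\beta-s\notin\mathbb N$, then $(-\Delta)^s(\chi d_+^s)\in C^{\beta-1-s}$. For a $C^3$ domain it takes $\beta<3$ close to $3$, avoiding the exceptional values, so $\beta-1-s$ is close to $2-s$. For $0<\varepsilon<\beta-2$ one has $1+\varepsilon-s<\beta-1-s$, which is \eqref{eq:4.16}.

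What your approach misses is that the regularity of $(-\Delta)^s(d^s)$ is governed by the smoothness of the boundary, nearly $2-s$ here. It is not governed by the crude order of the coordinate-change remainders, which is what limits your estimate to $1-s$.
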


\begin{proof}
Abatangelo--Ros-Oton~[4, Corollary~2.3] proved that if the boundary is $C^\beta$, with $\beta>1+s$ and $\beta-s\notin\mathbb N$, then
\[
  (-\Delta)^s\bigl(\chi d_+^s\bigr)\in C^{\beta-1-s}.
\]
Take $\beta<3$ sufficiently close to $3$ while avoiding the discrete exceptional values above, and then take $0<\varepsilon<\beta-2$. This gives $1+\varepsilon-s<\beta-1-s$.
\end{proof}

\section{Finite expansion of the boundary quotient}

\subsection{Higher-order boundary Schauder estimates}

We use~[4, Theorem~1.4 and Proposition~4.1] below. The specialized form needed here is as follows: if $\beta>s$, neither $\beta$ nor $\beta\pm s$ is an integer, $\Omega$ is $C^{\beta+1}$, $v\in L^\infty(\mathbb R^N)$, and
\[
  (-\Delta)^s v=f\quad\text{in }\Omega,
  \qquad v=0\quad\text{in }\Omega^c,
  \qquad f\in C^{\beta-s},
\]
then
\begin{equation}
  \frac{v}{d^s}\in C^\beta(\overline\Omega).
  \tag{5.1}\label{eq:5.1}
\end{equation}
Moreover, after subtracting the boundary Taylor polynomial multiplied by $d^s$, the remainder satisfies, on every interior ball, scale-invariant H\"older estimates consistent with its order of vanishing. To avoid ambiguity, we make the following definition. Let $0<\beta<1$, assume $s+\beta\notin\mathbb N$, let $R=0$ on $\partial\Omega$, and set
\[
  v=d^sR,
  \qquad \gamma=s+\beta,
  \qquad m=\lfloor\gamma\rfloor,
  \qquad \vartheta=\gamma-m\in(0,1).
\]
If
\begin{equation}
  |v(x)|\le C d(x)^\gamma
  \tag{5.2}\label{eq:5.2}
\end{equation}
and, for every interior point satisfying $d(x_0)=2r$,
\begin{equation}
  \sum_{k=0}^m r^{k-\gamma}
  \|D^k v\|_{L^\infty(B_r(x_0))}
  +[D^m v]_{C^\vartheta(B_r(x_0))}\le C,
  \tag{5.3}\label{eq:5.3}
\end{equation}
then $R$ is called a weighted remainder of order $\beta$. This terminology refers only to the two estimates \eqref{eq:5.2}--\eqref{eq:5.3}. We denote the collection of such functions by $\mathcal R_\beta$. Since $v=d^sR$, $d\asymp r$, and the Leibniz formula holds, \eqref{eq:5.3} equivalently also gives the interior-ball scale estimates
\begin{equation}
  \|D^kR\|_{L^\infty(B_r(x_0))}\le Cr^{\beta-k}
  \qquad(k\le m),
  \tag{5.4}\label{eq:5.4}
\end{equation}
together with the corresponding top-order H\"older seminorm estimate; this equivalent form will be used below when verifying products. Abatangelo--Ros-Oton~[4, Proposition~4.1, in particular (44)] shows that the quotient obtained after subtracting the boundary Taylor polynomial in \eqref{eq:5.1} satisfies precisely these estimates. All the exponents used below can be perturbed arbitrarily slightly so as to avoid the finitely many exceptional integer values.

The next lemma explains why multiplication of a vanishing boundary remainder by $d^a$ increases the exponent; this is not the ordinary H\"older multiplication rule.

\begin{lemma}[Multiplication of a vanishing remainder]\label{lem:5.1}
Let $0<a<s$ and $0<\beta<1$, and let $R$ be a weighted remainder of order $\beta$ in the sense of \eqref{eq:5.2}--\eqref{eq:5.3}. Then
\begin{equation}
  d^aR\in C^\zeta(\overline\Omega)
  \qquad\text{for every}\qquad
  0<\zeta<\min\{1,a+\beta\}.
  \tag{5.5}\label{eq:5.5}
\end{equation}
If $a+\beta<1$ and $s+\beta$ is not an integer, one may also take $\zeta=a+\beta$.
\end{lemma}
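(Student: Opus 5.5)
We prove the estimate by the standard dyadic-ball argument for Hölder regularity of functions that vanish at the boundary. Set $g=d^aR$ and $\gamma_0=a+\beta$. The plan has three steps. First, a pointwise bound: since $|R|=|v|d^{-s}\le Cd^{\beta}$ by \eqref{eq:5.2}, we get $|g(x)|\le Cd(x)^{a+\beta}$, so $g$ extends continuously by $0$ to $\partial\Omega$. Second, an interior gradient bound on Whitney balls. Third, a case split on the relative size of $|x-y|$ and the distances of $x,y$ to the boundary. Near the boundary we work in the tubular neighborhood where $d$ is the $C^3$ distance function. Away from it, $d$ is bounded below and $g$ is locally Lipschitz by \eqref{eq:5.4}, so we only need to consider points close to $\partial\Omega$.

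\emph{Interior step.} Fix $x_0$ with $d(x_0)=2r$. On $B_r(x_0)$ we have $d\asymp r$ and $|\nabla d^a|\le Cr^{a-1}$. Combining this with \eqref{eq:5.4} for $k=0,1$ gives
\[
\|\nabla g\|_{L^\infty(B_r(x_0))}\le C\bigl(r^{a-1}r^{\beta}+r^{a}r^{\beta-1}\bigr)=Cr^{a+\beta-1}.
\]
Here we need $m\ge1$, i.e.\ $s+\beta>1$. If instead $s+\beta<1$, then $m=0$ and we only have the $C^{\vartheta}$ seminorm of $v$ with $\vartheta=s+\beta$. In that case we replace the gradient bound by a Hölder bound: writing $R=v\,d^{-s}$ with $d^{-s}$ smooth on $B_r$, the scaled estimate \eqref{eq:5.3} gives $[R]_{C^{\vartheta}(B_r)}\le Cr^{-s}$. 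Together with $\|R\|_{L^\infty(B_r)}\le Cr^{\beta}$, interpolation yields $[R]_{C^{\zeta'}(B_r)}\le Cr^{\beta-\zeta'}$ for every $\zeta'\le\vartheta$, and hence $[g]_{C^{\zeta'}(B_r)}\le Cr^{a+\beta-\zeta'}$. Note that $\zeta'\le\vartheta=s+\beta$ and $a<s$, so $\zeta'=a+\beta$ is admissible whenever $a+\beta<1$. In both cases the conclusion is the same: on $B_r(x_0)$, $g$ is $C^{\zeta}$ with seminorm at most $Cr^{a+\beta-\zeta}$, for every $\zeta\le\min\{1,a+\beta\}$ (with $\zeta<1$ if $a+\beta\ge1$).

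\emph{Global step.} Take $x,y$ near $\partial\Omega$ with $d(x)\ge d(y)$ and set $\ell=|x-y|$. If $\ell<d(x)/2$, then $x$ and $y$ lie in a common Whitney ball of radius $\asymp d(x)$. The interior bound gives
\[
|g(x)-g(y)|\le Cd(x)^{a+\beta-\zeta}\ell^{\zeta}\le C\ell^{\zeta}
\]
as long as $\zeta\le a+\beta$. If $\ell\ge d(x)/2$, then $d(y)\le d(x)\le2\ell$, and the pointwise bound gives
\[
|g(x)-g(y)|\le C\bigl(d(x)^{a+\beta}+d(y)^{a+\beta}\bigr)\le C\ell^{a+\beta}\le C\ell^{\zeta}
\]
for $\ell$ small. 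Together the two cases show $g\in C^\zeta(\overline\Omega)$ for $\zeta\le\min\{a+\beta,1\}$, with $\zeta<1$ when $a+\beta\ge1$ because the interior gradient estimate only yields Lipschitz bounds with loss. Nothing is lost in the endpoint case $\zeta=a+\beta<1$, provided the scaled estimate \eqref{eq:5.3} is available, which is why the lemma requires $s+\beta\notin\mathbb N$ there.

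The main obstacle is the interior step: we have to obtain the sharp scale $r^{a+\beta-\zeta}$ uniformly over all Whitney balls. This has to come from the weighted estimate \eqref{eq:5.3} rather than from any global Hölder regularity of $R$, because $R$ itself is only $C^\beta$ and the factor $d^a$ is only $C^a$ up to the boundary, so the ordinary product rule would give just $C^{\min\{a,\beta\}}$. The gain comes entirely from combining the vanishing rate $d^{a+\beta}$ with the Whitney-scale bounds.
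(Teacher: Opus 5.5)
Your proof is correct and is essentially the paper's argument. You use the same split according to whether $|x-y|$ is below half the boundary distance. In the far regime you use the vanishing bound $|d^aR|\le Cd^{a+\beta}$. In the near regime you use the Whitney-ball scale estimates: the gradient bound $r^{\beta-1}$ when $s+\beta>1$, and $|R(x)-R(y)|\le Cr^{-s}|x-y|^{s+\beta}$ when $s+\beta<1$. This is precisely the paper's \eqref{eq:5.6}, repackaged as a scale-invariant $C^\zeta$ seminorm bound.

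One harmless slip: away from the boundary $g$ is not Lipschitz by \eqref{eq:5.4} when $m=0$. It is, however, locally $C^{s+\beta}$, and $\zeta\le a+\beta<s+\beta$, so the conclusion is unaffected; your Whitney-ball step covers those points anyway.
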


\begin{proof}
Take $x,y\in\Omega$, set $h=|x-y|$, and assume without loss of generality that $d(x)\le d(y)$. If $h\ge d(x)/2$, then \eqref{eq:5.2} gives $|R(x)|\le Cd(x)^\beta$. Since $d$ is $1$-Lipschitz, $d(y)\le d(x)+h\le3h$, and therefore
\[
  |d(x)^aR(x)-d(y)^aR(y)|\le Ch^{a+\beta}.
\]

Now suppose that $h<d(x)/2$. Then $t:=d(x)\asymp d(y)$. The interior-ball scale estimates and interpolation give
\begin{equation}
  |R(x)-R(y)|
  \le C\bigl(t^{-s}h^{s+\beta}+t^{\beta-1}h\bigr),
  \tag{5.6}\label{eq:5.6}
\end{equation}
where the first term may be omitted when $s+\beta>1$. Hence
\begin{align*}
  |d(x)^aR(x)-d(y)^aR(y)|
  &\le d(x)^a|R(x)-R(y)|
     +|R(y)|\,|d(x)^a-d(y)^a|\\
  &\le C\bigl(t^{a-s}h^{s+\beta}+t^{a+\beta-1}h\bigr).
\end{align*}
Since $a<s$ and $h<t$,
\[
  t^{a-s}h^{s+\beta}
  =h^{a+\beta}\left(\frac ht\right)^{s-a}
  \le h^{a+\beta}.
\]
If $a+\beta<1$, then
\[
  t^{a+\beta-1}h\le h^{a+\beta}.
\]
This proves the endpoint $\zeta=a+\beta<1$. If $a+\beta\ge1$, then $t^{a+\beta-1}h\le Ch$; for every $\zeta<1$, when $h<1$ one has $h\le h^\zeta$, while the preceding term $h^{a+\beta}\le h^\zeta$. Combining the two distance regimes proves \eqref{eq:5.5}.
\end{proof}

\begin{lemma}[Stability of weighted remainders under nonlinear composition]\label{lem:5.2}
Let $j\ge0$ and $\rho>s$, and assume that $0<\beta<(j+1)\rho$ and $s+\beta\notin\mathbb N$. Let
\[
  P_j(z)=C_0+\sum_{\ell=1}^j A_\ell z^\ell,
  \qquad C_0>0,
\]
and let $R\in\mathcal R_\beta$. Then, in a sufficiently small boundary neighborhood,
\begin{equation}
  \bigl(P_j(d^\rho)+R\bigr)^q
  =\sum_{\ell=0}^j B_\ell d^{\ell\rho}+\widetilde R,
  \qquad
  B_\ell=[z^\ell]P_j(z)^q,
  \tag{5.7}\label{eq:5.7}
\end{equation}
where $\widetilde R\in\mathcal R_\beta$.
\end{lemma}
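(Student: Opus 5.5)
We write $F(z)=P_j(z)^q$. Since $C_0>0$, $F$ is real-analytic near $z=0$, and $B_\ell=[z^\ell]F$. The plan is a Taylor expansion in two steps: first in the remainder $R$ around the polynomial part, then in $z=d^\rho$.

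\textbf{Step 1: remove $R$.} Set $g=P_j(d^\rho)$. In a thin boundary layer, $g\ge C_0/2$ and $|R|\le Cd^\beta$ is small, so $g+R\ge C_0/4$. Write
\[
(g+R)^q-g^q=R\cdot m,\qquad m=q\int_0^1(g+\theta R)^{q-1}\,d\theta .
\]
We then check that $R\,m\in\mathcal R_\beta$:
\begin{itemize}
\item The pointwise bound (5.2) for $d^s Rm$ is immediate, because $m$ is bounded.
\item For the scale estimates (5.3), or equivalently (5.4), work on an interior ball $B_r(x_0)$ with $d(x_0)=2r$. On such a ball $d\asymp r$ and $d$ is $C^2$, so $d^{\ell\rho}$ has $k$-th derivatives of size $Cr^{\ell\rho-k}\le Cr^{-k}$ for $k\le m\le 1$ (recall $\gamma=s+\beta<2$). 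Also $\|D^kR\|\le Cr^{\beta-k}$. The chain rule applied to the smooth function $(\cdot)^{q-1}$ on $[C_0/4,\infty)$ then gives $\|D^k m\|_{L^\infty(B_r)}\le Cr^{-k}$, together with the matching Hölder seminorm bound $[D^m m]_{C^\vartheta}\le Cr^{-m-\vartheta}$.
\item The Leibniz formula combines these into the weighted bounds for $Rm$ and $d^sRm$.
\end{itemize}

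\textbf{Step 2: expand $F(d^\rho)$.} We Taylor-expand $F$ at $0$ to order $j$:
\[
F(z)=\sum_{\ell\le j}B_\ell z^\ell+z^{j+1}E(z),\qquad E \text{ smooth near }0 .
\]
Hence $g^q=\sum_{\ell\le j}B_\ell d^{\ell\rho}+d^{(j+1)\rho}E(d^\rho)$. It remains to show that $R_1:=d^{(j+1)\rho}E(d^\rho)$ lies in $\mathcal R_\beta$, using $\beta<(j+1)\rho$:
\begin{itemize}
\item Vanishing: $|R_1|\le Cd^{(j+1)\rho}\le Cd^\beta$.
\item On $B_r(x_0)$, derivatives obey $\|D^kR_1\|\le Cr^{(j+1)\rho-k}\le Cr^{\beta-k}$, and the top Hölder seminorm is handled the same way. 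Here we use that $d\in C^3$ and $\rho>0$, so each derivative costs at most a factor $r^{-1}$.
\end{itemize}
Finally, $\widetilde R=Rm+R_1$, which is in $\mathcal R_\beta$ because the class is linear.

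\textbf{Main obstacle.} The only delicate point is bookkeeping of the top-order Hölder seminorm $[D^m(\cdot)]_{C^\vartheta(B_r)}$ under products and composition. We intend to handle it by rescaling. Define $\tilde v(y)=r^{-\gamma}v(x_0+ry)$ on $B_1$; then (5.3) becomes a uniform $C^{\gamma}(B_1)$ bound. In the same rescaled variables, $d^{\ell\rho}/r^{\ell\rho}$, $m$ and $E(d^\rho)$ are bounded in $C^2(B_1)$ uniformly in $r$. Products and compositions with smooth functions preserve $C^\gamma(B_1)$ with uniform constants, so scaling back gives exactly the weighted estimates. The assumption $s+\beta\notin\mathbb N$ guarantees $\vartheta\in(0,1)$, so that $C^\gamma$ is a genuine Hölder space and this rescaling argument applies.
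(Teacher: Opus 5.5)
Your proof is correct and follows essentially the same route as the paper. The paper likewise writes the difference as $E_1=R\int_0^1F'(P_j(d^\rho)+tR)\,dt$ plus $E_2=d^{(j+1)\rho}Q_j(d^\rho)$, using $\beta<(j+1)\rho$ for $E_2$; it additionally splits off the constant $F'(C_0)$ from the multiplier, which your bound $\|D^k m\|\le Cr^{-k}$ shows is unnecessary, and one small correction is that after rescaling $m$ is uniformly bounded only in $C^{\gamma}(B_1)$ (it contains $R$), not in $C^2(B_1)$, which is all your product and composition step needs.
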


\begin{proof}
Let $F(t)=t^q$. Since $R=O(d^\beta)$ and $P_j(d^\rho)\to C_0$, after shrinking the neighborhood, both $P_j(d^\rho)$ and $P_j(d^\rho)+R$ lie in the same compact subinterval of $(0,\infty)$. Split the remainder as
\[
  E_1=F\bigl(P_j(d^\rho)+R\bigr)-F\bigl(P_j(d^\rho)\bigr),
  \qquad
  E_2=F\bigl(P_j(d^\rho)\bigr)-\sum_{\ell=0}^j B_\ell d^{\ell\rho}.
\]
The function $z\mapsto F(P_j(z))$ is analytic near $z=0$, so there is a smooth function $Q_j$ such that
\[
  E_2=d^{(j+1)\rho}Q_j(d^\rho).
\]
Because $(j+1)\rho>\beta$, differentiating the preceding expression term by term on every interior ball with $d(x_0)=2r$ gives the estimates with exponent $\beta$ in \eqref{eq:5.2}--\eqref{eq:5.3}; hence $E_2\in\mathcal R_\beta$.

The integral form of Taylor's formula gives
\begin{equation}
  E_1=R C_R,
  \qquad
  C_R=\int_0^1 F'\bigl(P_j(d^\rho)+tR\bigr)\,dt.
  \tag{5.8}\label{eq:5.8}
\end{equation}
Write $C_R=F'(C_0)+\widehat C_R$. On an interior ball at depth $r$, \eqref{eq:5.4} and
\[
  \|D^k d^{\ell\rho}\|_\infty\le Cr^{\ell\rho-k}
\]
give
\[
  \|D^k\widehat C_R\|_\infty
  \le Cr^{\min\{\rho,\beta\}-k},
\]
together with the corresponding scale H\"older estimates; this follows directly by applying the chain rule to \eqref{eq:5.8}. Thus
\[
  d^sE_1=F'(C_0)d^sR+\widehat C_Rd^sR.
\]
The first term satisfies the original estimates with exponent $s+\beta$, while the order of vanishing and the interior-ball scale order of the second term are at least $s+\beta+\min\{\rho,\beta\}$, and hence it also satisfies the weaker estimates with exponent $s+\beta$. Therefore $E_1\in\mathcal R_\beta$. Adding the two parts proves the conclusion.
\end{proof}

\subsection{Finite recursion}

\begin{theorem}[Boundary quotient expansion]\label{thm:5.3}
Suppose that $u$ satisfies \textup{(1.4)}, and set
\[
  \rho=s(1+q),
  \qquad a=sq=\rho-s.
\]
Then there exist $\varepsilon>0$ and $H\in C^{1,\varepsilon}(\overline\Omega)$ such that the following assertions hold.

If there is no positive integer $M$ such that $M\rho=1$, then
\begin{equation}
  \frac{u}{d^s}
  =H+\sum_{\substack{k\ge1\\k\rho<1}}A_kd^{k\rho}.
  \tag{5.9}\label{eq:5.9}
\end{equation}
If there is a positive integer $M$ such that $M\rho=1$, then
\begin{equation}
  \frac{u}{d^s}
  =H+\sum_{\substack{k\ge1\\k\rho<1}}A_kd^{k\rho}
  +A_*d\log d.
  \tag{5.10}\label{eq:5.10}
\end{equation}
All the $A_k$ and $A_*$ are constants independent of the boundary point, and $H=C_0$ on $\partial\Omega$.
\end{theorem}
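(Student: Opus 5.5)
The plan is a finite bootstrap in the exponent of the remainder. At each stage we peel off the explicit normal powers $d^{k\rho}$ with the one-dimensional correctors of Lemma~\ref{lem:4.1}, and we raise the regularity of the remainder using the higher-order Schauder estimate \eqref{eq:5.1}.

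The induction hypothesis at stage $j$ (with $j\rho<1$) is
\[
\frac{u}{d^s}=P_j(d^\rho)+R_j,\qquad P_j(z)=C_0+\sum_{\ell=1}^jA_\ell z^\ell,\qquad R_j\in\mathcal R_{\beta_j},
\]
with constant $A_\ell$ and $\beta_j$ slightly less than $\min\{(j+1)\rho,1\}$, perturbed to avoid exceptional integers.

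\emph{Base case $j=0$.} Since $u\in L^\infty$, we have $u^q\in C^{\min\{sq,\cdot\}}$. Precisely, $u^q=d^{a}(u/d^s)^q$ with $u/d^s\in C^{\varepsilon}$, so the right-hand side $f=\Lambda u^q$ is $C^{a-}$. Then \eqref{eq:5.1} with $\beta=\rho-$ gives $u/d^s\in C^{\rho-}$. The boundary condition $u/d^s=C_0$ on $\partial\Omega$, together with the Taylor remainder property from [4, Prop.~4.1], gives $u/d^s=C_0+R_0$ with $R_0\in\mathcal R_{\rho-}$.

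\emph{Inductive step.} Apply Lemma~\ref{lem:5.2}:
\[
u^q=d^a\Bigl(\sum_{\ell\le j}B_\ell d^{\ell\rho}+\widetilde R\Bigr),\qquad \widetilde R\in\mathcal R_{\beta_j}.
\]
Then Lemma~\ref{lem:5.1} gives $d^a\widetilde R\in C^{\min\{1,a+\beta_j\}-}$. The explicit terms $B_\ell d^{a+\ell\rho}=B_\ell d^{\alpha_\ell-s}$ with $\alpha_\ell=(\ell+1)\rho$ are matched by the correctors in \eqref{eq:4.6}. Note that $s<\alpha_\ell$ automatically. Whenever $\alpha_\ell<1$, set
\[
A_{\ell+1}=\Lambda B_\ell/\kappa_s(\alpha_\ell),
\]
which is legitimate because $\kappa_s\neq0$ on $(s,1)$. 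Then $v=u-d_+^s\sum_{\ell}A_{\ell+1}\chi d_+^{\ell\rho}$, patched over boundary charts by a partition of unity, solves $(-\Delta)^sv=f$. Here $f$ is the sum of $d^a\widetilde R$, the remainders $G_{\alpha_\ell}$, and the term $(-\Delta)^s(\chi d_+^s)$ from Lemma~\ref{lem:4.2}, multiplied by $C_0$. All these lie in $C^{\theta}$ with $\theta$ close to $\min\{1,a+\beta_j,2-2s\}$ but smaller.

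Note that $A_{\ell+1}$ is a constant, because $B_\ell=[z^\ell]P_j^q$ depends only on the constants $C_0,A_1,\dots,A_j$. This is the mechanism behind "constant along the whole boundary". Applying \eqref{eq:5.1} to $v$ with $\beta-s=\theta$ raises the remainder order to $\beta_{j+1}\approx\min\{1+s-?,\,a+\beta_j+s\}$, i.e.\ $\beta_{j+1}\approx\beta_j+\rho$. The gain $\rho$ per step means the recursion terminates after finitely many steps, once $(j+1)\rho\ge1$.

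\emph{Final step.} We must push the remainder to $C^{1,\varepsilon}$, which requires $\beta>1$, i.e.\ $f\in C^{1+\varepsilon-s}$. In the nonresonant case, the terms with $\alpha_\ell=(\ell+1)\rho>1$ are left in $f$. They contribute $d^{\alpha_\ell-s}$, which is $C^{1-s+\varepsilon}$ away from resonance. If $M\rho=1$, the term $B_{M-1}d^{1-s}$ cannot be matched by a pure power because $\kappa_s(1)=0$. We instead use \eqref{eq:4.8} with
\[
A_*=\Lambda B_{M-1}/\kappa_s'(1),
\]
which is nonzero by \eqref{eq:4.3}. Then \eqref{eq:5.1} with $\beta=1+\varepsilon$ gives $H:=v/d^s\in C^{1,\varepsilon}$, with $H=C_0$ on $\partial\Omega$ because all added terms vanish there.

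\emph{Main obstacle.} The hard part is the last step. The correctors from Lemma~\ref{lem:4.1} only have remainders $G_\alpha\in C^{\theta}$ with $\theta<1$, whereas the final Schauder step needs $f\in C^{1+\varepsilon-s}$. This requirement holds when $1+\varepsilon-s<\min\{1,2-2s\}$, which is true for small $\varepsilon$ since $s<1$. I would check this exponent bookkeeping carefully, together with the requirement that Lemma~\ref{lem:5.1}'s product $d^a\widetilde R$ reaches $C^{1+\varepsilon-s}$. This needs $a+\beta_j>1-s$, i.e.\ $\beta_j>1-\rho$, which the inductive gain guarantees at the last stage.
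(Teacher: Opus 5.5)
Your proposal is correct and follows essentially the same route as the paper. It is a finite bootstrap in which Lemma~5.2 expands $\psi^q$ and Lemma~5.1 controls $d^{a}\widetilde R$. Constant-coefficient correctors $A_{\ell+1}=\Lambda B_\ell/\kappa_s((\ell+1)\rho)$ cancel the subcritical normal powers, and in the resonant case the $d_+^{s+1}\log d_+$ corrector with $A_*=\Lambda B_{M-1}/\kappa_s'(1)$ removes $d^{1-s}$ before the final Schauder step with $\beta=1+\varepsilon$. Your choice to run the induction one step further and leave $B_K d^{(K+1)\rho-s}$ explicitly in $f$ is only a bookkeeping variant of the paper stopping at $j=K-1$.

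Two small repairs are needed. First, in the base case, $u/d^s\in C^{\varepsilon}$ with small $\varepsilon$ only gives $u^q\in C^{\min\{a,\varepsilon\}}$. Either bootstrap once, or, as the paper does, use $u\in C^s(\mathbb R^N)$ and $|r^q-t^q|\le|r-t|^q$ to get $u^q\in C^{\rho-s}$ directly. Second, the corrector attached to $A_{\ell+1}$ should be $\chi d_+^{s+(\ell+1)\rho}$, and the base term $C_0\chi d_+^s$ must also be subtracted in $v$.
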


\begin{proof}
If $\rho>1$, the basic boundary regularity $u\in C^s(\mathbb R^N)$ and $0<q<1$ give
\[
  u^q\in C^{sq}=C^{\rho-s}.
\]
Choose $0<\varepsilon<\rho-1$ while avoiding the exceptional integer values in the Schauder theorem, and take $\beta=1+\varepsilon$ in \eqref{eq:5.1}. This yields $u/d^s\in C^{1,\varepsilon}$. It therefore remains only to consider $\rho\le1$.

All computations below are performed in a fixed tubular neighborhood of the boundary. Every power is multiplied by a cutoff $\chi$ equal to $1$ in a smaller tubular neighborhood; the terms generated by the cutoff away from the boundary are smooth and will eventually all be absorbed into $H$. Write
\[
  G_0=(-\Delta)^s(\chi d_+^s),
  \qquad
  G_\alpha=(-\Delta)^s(\chi d_+^{s+\alpha})
  -\kappa_s(\alpha)d^{\alpha-s}.
\]
Fix a sufficiently small $\eta>0$ such that the following finitely many exponents avoid the exceptional integer values in the Schauder theorem and
\begin{equation}
  0<\eta<\min\{\rho-s,\rho,1-s\}.
  \tag{5.11}\label{eq:5.11}
\end{equation}
If there is no positive integer $M$ such that $M\rho=1$, also let $K\rho<1<(K+1)\rho$ and additionally require
\begin{equation}
  \eta<(K+1)\rho-1.
  \tag{5.12}\label{eq:5.12}
\end{equation}
Because only finitely many exponents occur, one may simultaneously require that $\beta_j=(j+1)\rho-\eta$, $\beta_j\pm s$, and $s+\beta_j$ all avoid integer values. Set $\psi=u/d^s$. The basic boundary regularity gives $u\in C^s(\mathbb R^N)$; since $|r^q-t^q|\le|r-t|^q$,
\[
  u^q\in C^{sq}=C^{\rho-s}.
\]
First let $v_0=u-C_0\chi d_+^s$. Its right-hand side
\[
  \Lambda u^q-C_0(-\Delta)^s(\chi d_+^s)
\]
belongs to $C^{\rho-s-\eta}$: the first term even belongs to $C^{\rho-s}$, and the second is smoother by Lemma~\ref{lem:4.2}. Taking $\beta_0=\rho-\eta>s$ in \eqref{eq:5.1} gives
\begin{equation}
  \psi=C_0+R_0,
  \tag{5.13}\label{eq:5.13}
\end{equation}
where $R_0|_{\partial\Omega}=0$. The interior-ball scale estimates in~[4, Proposition~4.1, formula~(44)] then give
\begin{equation}
  R_0\in\mathcal R_{\rho-\eta}.
  \tag{5.14}\label{eq:5.14}
\end{equation}
Here $\rho-\eta<1$; even when $\rho=1$, the strict loss $\eta$ ensures that the definition applies.

We now perform a finite induction. Suppose that, for some $j\ge0$, we have obtained
\begin{equation}
  \psi=C_0+\sum_{\ell=1}^jA_\ell d^{\ell\rho}+R_j,
  \tag{5.15}\label{eq:5.15}
\end{equation}
where
\begin{equation}
  R_j\in\mathcal R_{\beta_j},
  \qquad \beta_j=(j+1)\rho-\eta<1.
  \tag{5.16}\label{eq:5.16}
\end{equation}
To check the nonlinear remainder term by term, set
\[
  P_j=C_0+\sum_{\ell=1}^jA_\ell d^{\ell\rho},
  \qquad F(t)=t^q.
\]
Regard $P_j$ as a polynomial in $z=d^\rho$, and define
\begin{equation}
  B_k=[z^k]\left(C_0+\sum_{\ell=1}^jA_\ell z^\ell\right)^q,
  \qquad 0\le k\le j.
  \tag{5.17}\label{eq:5.17}
\end{equation}
Since $\beta_j=(j+1)\rho-\eta<(j+1)\rho$, Lemma~\ref{lem:5.2} gives, term by term,
\begin{equation}
  \psi^q=\sum_{\ell=0}^jB_\ell d^{\ell\rho}+\widetilde R_j,
  \tag{5.18}\label{eq:5.18}
\end{equation}
where
\begin{equation}
  \widetilde R_j\in\mathcal R_{\beta_j}.
  \tag{5.19}\label{eq:5.19}
\end{equation}
Notice that the newly added term $A_jz^j$ does not change the coefficients of the lower powers, so the $B_0,\ldots,B_j$ defined at successive steps are compatible. The right-hand side of the equation is
\begin{equation}
  \Lambda u^q=\Lambda d^{\rho-s}\psi^q.
  \tag{5.20}\label{eq:5.20}
\end{equation}
Thus the $j$th as-yet-uncancelled purely normal term in \eqref{eq:5.20} is
\[
  \Lambda B_jd^{(j+1)\rho-s}.
\]

If $(j+1)\rho<1$, note that $(j+1)\rho\ge\rho>s$. Since $\kappa_s((j+1)\rho)\ne0$ by Lemma~\ref{lem:4.1}, define
\begin{equation}
  A_{j+1}=\frac{\Lambda B_j}{\kappa_s((j+1)\rho)}.
  \tag{5.21}\label{eq:5.21}
\end{equation}
After subtracting $A_{j+1}\chi d_+^{s+(j+1)\rho}$ from $u$, the principal term in \eqref{eq:4.6} cancels exactly $\Lambda B_jd^{(j+1)\rho-s}$. More precisely, define
\[
  v_{j+1}=u-C_0\chi d_+^s
  -\sum_{\ell=1}^{j+1}A_\ell\chi d_+^{s+\ell\rho}.
\]
The relations $A_\ell\kappa_s(\ell\rho)=\Lambda B_{\ell-1}$ from the preceding steps give the full remainder equation
\begin{equation}
  (-\Delta)^sv_{j+1}
  =\Lambda d^{\rho-s}\widetilde R_j-C_0G_0
  -\sum_{\ell=1}^{j+1}A_\ell G_{\ell\rho}.
  \tag{5.22}\label{eq:5.22}
\end{equation}
By \eqref{eq:5.19} and Lemma~\ref{lem:5.1},
\begin{equation}
  d^{\rho-s}\widetilde R_j
  \in C^{(j+2)\rho-s-\eta}
  \tag{5.23}\label{eq:5.23}
\end{equation}
provided that $(j+2)\rho\le1$; here
\[
  (\rho-s)+\beta_j=(j+2)\rho-s-\eta<1,
\]
and \eqref{eq:5.11}, together with the subsequent choice of noninteger exponents, has been used to satisfy the endpoint condition. The curvature remainder belongs to every
\[
  C_*^\theta,
  \qquad
  \theta<\min\{1,1+(j+1)\rho-s,2-2s\}.
\]
When $(j+2)\rho\le1$, the right-hand-side exponent required for the next Schauder estimate is
\[
  (j+2)\rho-s-\eta;
\]
it is strictly below all three bounds above, because
\[
  1+(j+1)\rho-s-\bigl((j+2)\rho-s-\eta\bigr)
  =1-\rho+\eta>0,
\]
\[
  1-\bigl((j+2)\rho-s-\eta\bigr)>0,
\]
and
\[
  2-2s-\bigl((j+2)\rho-s-\eta\bigr)>1-s>0.
\]
For every $1\le\ell\le j+1$, the third regularity restriction for an earlier curvature remainder is also satisfied, since
\[
  1+\ell\rho-s-\bigl((j+2)\rho-s-\eta\bigr)
  =1-(j+2-\ell)\rho+\eta
  \ge1-(j+1)\rho+\eta>0,
\]
where the last step uses $(j+2)\rho\le1$. The base term is controlled by Lemma~\ref{lem:4.2}. Therefore, applying \eqref{eq:5.1} to \eqref{eq:5.22} with
\[
  \beta_{j+1}=(j+2)\rho-\eta
\]
and then using~[4, Proposition~4.1, (44)] gives
\[
  R_{j+1}\in\mathcal R_{\beta_{j+1}}.
\]
This proves the induction step. Because the same $\eta$ is used at every step, the loss does not accumulate.

In the nonresonant case, let $K\ge0$ be the unique integer such that
\[
  K\rho<1<(K+1)\rho.
\]
The present discussion concerns $\rho\le1$, so here $K\ge1$. The induction first gives
\[
  R_{K-1}\in\mathcal R_{K\rho-\eta}.
\]
The Taylor expansion above with $j=K-1$ then defines $B_{K-1}$ and $A_K$ and cancels the last purely normal term of order below one. At this point,
\[
  d^{\rho-s}\widetilde R_{K-1}\in C^\zeta
  \qquad\text{for every}\qquad
  \zeta<\min\{1,(K+1)\rho-s-\eta\}.
\]
Choose
\begin{equation}
  0<\varepsilon<\min\{s,1-s,\rho,(K+1)\rho-1-\eta\},
  \tag{5.24}\label{eq:5.24}
\end{equation}
while avoiding the exceptional integer values. Then
\[
  1+\varepsilon-s
  <\min\{1,(K+1)\rho-s-\eta\}.
\]
At the same time, $\varepsilon<1-s$ ensures that $1+\varepsilon-s<2-2s$, so all the curvature remainders and the base term also belong to $C^{1+\varepsilon-s}$. Finally, apply \eqref{eq:5.1}, with $\beta=1+\varepsilon$, to
\[
  u-C_0\chi d_+^s
  -\sum_{k=1}^K A_k\chi d_+^{s+k\rho}.
\]
Its boundary quotient belongs to $C^{1,\varepsilon}$. Absorbing into this quotient the smooth terms generated by the cutoff in the interior gives \eqref{eq:5.9}.

If $M\rho=1$, then for $M\ge2$ first recurse up to $A_{M-1}$; after the last ordinary power has been cancelled, the equality case $(j+2)\rho=1$ in the induction above gives
\[
  R_{M-1}\in\mathcal R_{1-\eta}.
\]
When $M=1$, this is exactly the initial conclusion $R_0\in\mathcal R_{1-\eta}$. Thus the two cases can both be written as
\[
  \psi=C_0+\sum_{k=1}^{M-1}A_kd^{k\rho}+R_{M-1},
  \qquad R_{M-1}\in\mathcal R_{1-\eta}.
\]
Applying Lemma~\ref{lem:5.2} once more gives
\[
  \psi^q=\sum_{\ell=0}^{M-1}B_\ell d^{\ell\rho}
  +\widetilde R_{M-1},
  \qquad \widetilde R_{M-1}\in\mathcal R_{1-\eta}.
\]
The only uncancelled critical purely normal term is
\[
  \Lambda B_{M-1}d^{1-s}.
\]
Since $\kappa_s(1)=0$, the ordinary power $d^{s+1}$ cannot cancel it. By \eqref{eq:4.8}, set
\begin{equation}
  A_*=\frac{\Lambda B_{M-1}}{\kappa_s'(1)}.
  \tag{5.25}\label{eq:5.25}
\end{equation}
After subtracting $A_*\chi d_+^{s+1}\log d_+$, this term is cancelled. By Lemma~\ref{lem:5.1}, the remaining nonlinear remainder satisfies
\[
  d^{\rho-s}\widetilde R_{M-1}\in C^\zeta
  \qquad\text{for every}\qquad
  \zeta<\min\{1,1+\rho-s-\eta\}.
\]
Choose
\begin{equation}
  0<\varepsilon<\min\{s,1-s,\rho-\eta\},
  \tag{5.26}\label{eq:5.26}
\end{equation}
while avoiding the exceptional integer values. Then
\[
  1+\varepsilon-s
  <\min\{1,1+\rho-s-\eta,2-2s\}.
\]
Hence the nonlinear remainder above, $G_{\log}$, the earlier curvature remainders, and the base term all belong to $C^{1+\varepsilon-s}$. Finally, apply \eqref{eq:5.1}, with $\beta=1+\varepsilon$, to
\[
  u-C_0\chi d_+^s
  -\sum_{k=1}^{M-1}A_k\chi d_+^{s+k\rho}
  -A_*\chi d_+^{s+1}\log d_+
\]
to obtain \eqref{eq:5.10}. Formula \eqref{eq:4.1} shows that $\alpha=1$ is the only zero in $(s,1]$, and therefore no other logarithmic terms occur.

Each $B_j$ is determined only by $C_0$ and the preceding constants $A_\ell$, so the coefficients given by \eqref{eq:5.21} and \eqref{eq:5.25} are the same along the entire boundary. All the subtracted power and logarithmic terms vanish on the boundary, and hence the final function satisfies $H|_{\partial\Omega}=C_0$.
\end{proof}

\section{Tangential Cancellation at an Orthogonal Corner}
\label{sec:tangential-cancellation}

This section concerns only $N\geq 2$; when $N=1$, the moving-plane method reduces to moving points, and no orthogonal corner arises. Let $Q\in\partial\Omega$, let $\nu$ be the inward unit normal at $Q$, and let $\tau$ be a unit tangential vector. Set
\begin{equation}
  x_{\pm}(t)=Q+t(\nu\pm\tau),\qquad t>0.
  \tag{6.1}\label{eq:orthogonal-paths}
\end{equation}

\begin{lemma}[Second-order cancellation of the distance]
\label{lem:distance-second-order-cancellation}
As $t\downarrow0$,
\begin{equation}
  d(x_{+}(t))-d(x_{-}(t))=o(t^{2}),
  \qquad
  d(x_{\pm}(t))=t+O(t^{2}).
  \tag{6.2}\label{eq:distance-second-order-cancellation}
\end{equation}
In particular, $x_{\pm}(t)\in\Omega$ for all sufficiently small $t>0$.
\end{lemma}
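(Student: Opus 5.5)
The plan is a second-order Taylor expansion of the defining function $d$ at the boundary point $Q$, combined with the eikonal identity $|\nabla d|=1$, which kills the mixed normal--tangential part of the Hessian. Since $\Omega$ is $C^3$, the signed distance is $C^3$ in a tubular neighborhood of $\partial\Omega$, and by the convention of Section~4 $d$ agrees with it there. Hence $d\in C^3$ near $Q$. For a fixed direction $v$ we can then write
\[
  d(Q+tv)=d(Q)+t\,\nabla d(Q)\cdot v+\tfrac{t^2}{2}\,v^{T}D^2d(Q)\,v+O(t^3),
\]
uniformly for $|v|\le 2$. Only $d\in C^2$ is needed for an $o(t^2)$ remainder; $C^3$ gives $O(t^3)$, which is stronger than required.

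The key steps, in order, are the following.
\begin{enumerate}
\item Since $Q\in\partial\Omega$ we have $d(Q)=0$, and since $d$ is the signed distance, positive inside, $\nabla d(Q)=\nu$.
\item Differentiate $|\nabla d|^2=1$, valid throughout the tubular neighborhood, to get $D^2d\,\nabla d=0$. At $Q$ this reads $D^2d(Q)\nu=0$. (Geometrically, $D^2d(Q)$ restricted to $T_Q\partial\Omega$ is minus the second fundamental form and it annihilates the normal direction.)
\item Insert $v=\nu\pm\tau$. The linear term is $t\,\nu\cdot(\nu\pm\tau)=t$, because $\tau\perp\nu$. Using symmetry of $D^2d(Q)$ and step~2, the quadratic form is
\[
  (\nu\pm\tau)^{T}D^2d(Q)(\nu\pm\tau)=\nu^{T}D^2d(Q)\nu\pm2\,\tau^{T}D^2d(Q)\nu+\tau^{T}D^2d(Q)\tau=\tau^{T}D^2d(Q)\tau .
\]
This is independent of the sign.
\item Consequently
\[
  d(x_\pm(t))=t+\tfrac{t^2}{2}\,\tau^{T}D^2d(Q)\tau+O(t^3).
\]
This gives $d(x_\pm(t))=t+O(t^2)$, and subtracting the two expansions gives $d(x_+(t))-d(x_-(t))=O(t^3)=o(t^2)$.
\item For small $t$, the points $x_\pm(t)$ lie in the tubular neighborhood, and $d(x_\pm(t))\ge t/2>0$. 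Since $d$ is positive exactly inside $\Omega$ there, this yields $x_\pm(t)\in\Omega$.
\end{enumerate}

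The only point requiring care is step~2 together with the justification that $d$ is genuinely $C^2$ (here $C^3$) up to and across $\partial\Omega$ near $Q$. I would cite the standard tubular-neighborhood regularity of the signed distance for $C^k$ boundaries, $k\ge2$. The vanishing of the mixed term $\tau^{T}D^2d(Q)\nu$ is what produces the cancellation at second order; without it the difference would be of exact order $t^2$, so this is the crux, though it is elementary.
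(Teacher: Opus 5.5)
Your proposal is correct and follows essentially the same route as the paper: a second-order Taylor expansion of $d$ at $Q$, with the eikonal identity $|\nabla d|^2=1$ used to kill the mixed term $D^2d(Q)[\nu,\tau]$, so the quadratic terms coincide for both signs. Using the $C^3$ regularity of the signed distance to get an $O(t^3)$ difference is a harmless strengthening of the paper's $C^2$, $o(t^2)$ version.
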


\begin{proof}
The signed distance function $d$ belongs to $C^{2}$ in a tubular neighborhood of the boundary and satisfies
\[
  d(Q)=0,
  \qquad
  \nabla d(Q)=\nu,
  \qquad
  |\nabla d|^{2}=1.
\]
Differentiating the last identity in the tangential direction $\tau$ gives
\[
  D^{2}d(Q)[\nu,\tau]=0.
\]
A second-order Taylor expansion at $Q$ yields
\[
  d(x_{\pm}(t))
  =t+\frac{t^{2}}{2}D^{2}d(Q)[\nu\pm\tau,\nu\pm\tau]+o(t^{2}).
\]
The difference of the quadratic Taylor terms in the two expansions is
\[
  2D^{2}d(Q)[\nu,\tau]t^{2}=0.
\]
This proves \eqref{eq:distance-second-order-cancellation}. Moreover,
$d(x_{\pm}(t))=t+O(t^{2})>0$, so both points do lie in $\Omega$.
\end{proof}

\begin{proposition}[Tangential cancellation]
\label{prop:tangential-cancellation}
Let $\psi=u/d^{s}$. Along the two paths in \eqref{eq:orthogonal-paths},
\begin{equation}
  \psi(x_{+}(t))-\psi(x_{-}(t))=o(t).
  \tag{6.3}\label{eq:psi-tangential-cancellation}
\end{equation}
If a moving plane through $Q$ has normal $\tau$, so that its reflection interchanges $x_{+}(t)$ and $x_{-}(t)$, and if
$w(x)=u(Q_{\mathrm{ref}}x)-u(x)$ is defined on the side containing $x_{-}(t)$, then
\begin{equation}
  w(x_{-}(t))=o(t^{1+s}).
  \tag{6.4}\label{eq:w-corner-upper-bound}
\end{equation}
\end{proposition}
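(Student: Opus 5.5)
We use the boundary quotient expansion of Theorem~\ref{thm:5.3} and treat each piece separately. Write
\[
  \psi=H+\sum_{k\ge1,\,k\rho<1}A_kd^{k\rho}\;\bigl(+A_*d\log d\bigr),
\]
with $H\in C^{1,\varepsilon}(\overline\Omega)$, $H=C_0$ on $\partial\Omega$, and constant coefficients. The point is that every singular term is a function of $d$ alone with constant coefficients. Lemma~\ref{lem:distance-second-order-cancellation} says $d(x_+(t))$ and $d(x_-(t))$ agree to order $o(t^2)$, so these terms cancel to high order. The regular part $H$ must be handled separately, using the boundary condition.

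\emph{The regular part $H$.} Since $H\in C^{1,\varepsilon}$, a first-order Taylor expansion at $Q$ gives
\[
  H(x_\pm(t))=H(Q)+t\,\nabla H(Q)\cdot(\nu\pm\tau)+O(t^{1+\varepsilon}).
\]
Hence $H(x_+(t))-H(x_-(t))=2t\,\partial_\tau H(Q)+O(t^{1+\varepsilon})$. Because $H\equiv C_0$ on $\partial\Omega$, which is a $C^3$ hypersurface tangent to $\tau$ at $Q$, the tangential derivative $\partial_\tau H(Q)$ vanishes. This makes the $H$ contribution $O(t^{1+\varepsilon})=o(t)$. I expect this step to be the crux: it is exactly where the overdetermined condition enters, and it is why the expansion needs $H\in C^{1,\varepsilon}$ up to the boundary, not mere continuity.

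\emph{The power and logarithmic terms.} For $g(r)=r^{k\rho}$ with $k\rho<1$, apply the mean value theorem between $d_\pm:=d(x_\pm(t))=t+O(t^2)$:
\[
  |g(d_+)-g(d_-)|\le \sup_{r\asymp t}|g'(r)|\,|d_+-d_-|\le Ct^{k\rho-1}\,o(t^2)=o(t^{1+k\rho})=o(t).
\]
For $g(r)=r\log r$ we have $|g'(r)|\le C|\log t|$ on $r\asymp t$, so the difference is $o(t^2|\log t|)=o(t)$. Since the coefficients $A_k$ and $A_*$ are the same constants at both points, summing over the finitely many terms proves \eqref{eq:psi-tangential-cancellation}.

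\emph{The estimate for $w$.} The reflection across the plane through $Q$ with normal $\tau$ maps $x_-(t)$ to $x_+(t)$, so
\[
  w(x_-(t))=u(x_+(t))-u(x_-(t))=d_+^s\psi(x_+)-d_-^s\psi(x_-).
\]
Split this as
\[
  d_+^s\bigl(\psi(x_+)-\psi(x_-)\bigr)+\psi(x_-)\bigl(d_+^s-d_-^s\bigr).
\]
The first term is $O(t^s)\,o(t)=o(t^{1+s})$. In the second term $\psi$ is bounded, and by the mean value theorem
\[
  |d_+^s-d_-^s|\le Ct^{s-1}\,o(t^2)=o(t^{1+s}).
\]
Together these give \eqref{eq:w-corner-upper-bound}. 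The only remaining care is to verify that $x_\pm(t)$ lie in the tubular neighbourhood where $d$ is the distance function and the cutoffs in Theorem~\ref{thm:5.3} equal $1$; for small $t$ this follows from Lemma~\ref{lem:distance-second-order-cancellation}.
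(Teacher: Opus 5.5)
Your proposal is correct and follows the paper's proof essentially step by step. You expand $\psi$ via Theorem~\ref{thm:5.3} and remove the constant-coefficient powers $d^{k\rho}$ and the $d\log d$ term with the mean value theorem plus the $o(t^2)$ cancellation of Lemma~\ref{lem:distance-second-order-cancellation}; you then use $\partial_\tau H(Q)=0$ for the $C^{1,\varepsilon}$ part and split $w(x_-(t))$ exactly as the paper does. The only cosmetic difference is that the paper treats $\rho>1$ separately, whereas your uniform argument covers it automatically, since the sum of power terms is then empty and $H=\psi$.
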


\begin{proof}
If $\rho>1$, Theorem~5.3 directly gives $\psi\in C^{1,\varepsilon}$. Since $\psi=C_{0}$ on the boundary, its tangential derivative at $Q$ vanishes, and hence \eqref{eq:psi-tangential-cancellation} holds.

Now suppose that $\rho\leq1$. For every $\gamma=k\rho>0$, Lemma~6.1 and the mean value theorem give
\begin{align}
  \bigl|d(x_{+})^{\gamma}-d(x_{-})^{\gamma}\bigr|
  &\leq Ct^{\gamma-1}\bigl|d(x_{+})-d(x_{-})\bigr| \notag\\
  &=o(t^{1+\gamma})=o(t).
  \tag{6.5}\label{eq:power-tangential-cancellation}
\end{align}
Likewise,
\begin{align}
  &\bigl|d(x_{+})\log d(x_{+})-d(x_{-})\log d(x_{-})\bigr| \notag\\
  &\qquad\leq C(1+|\log t|)\bigl|d(x_{+})-d(x_{-})\bigr|
  =o(t^{2}|\log t|)=o(t).
  \tag{6.6}\label{eq:log-tangential-cancellation}
\end{align}
All such terms in the expansion have constant coefficients. The remainder satisfies $H\in C^{1,\varepsilon}$ and $H=C_{0}$ on the boundary; hence $\partial_{\tau}H(Q)=0$, and therefore
$H(x_{+})-H(x_{-})=o(t)$. Combining all terms proves \eqref{eq:psi-tangential-cancellation}.

Finally, the definition of the reflection gives
$w(x_{-}(t))=u(x_{+}(t))-u(x_{-}(t))$. Writing $u=d^{s}\psi$, we obtain
\begin{align*}
  w(x_{-})
  &=d(x_{+})^{s}\bigl(\psi(x_{+})-\psi(x_{-})\bigr)\\
  &\quad+\psi(x_{-})\bigl(d(x_{+})^{s}-d(x_{-})^{s}\bigr).
\end{align*}
By \eqref{eq:psi-tangential-cancellation}, the first term is $o(t^{1+s})$. For the second term,
\[
  \bigl|d(x_{+})^{s}-d(x_{-})^{s}\bigr|
  \leq Ct^{s-1}\bigl|d(x_{+})-d(x_{-})\bigr|
  =o(t^{1+s}),
\]
and $\psi$ is bounded. Thus \eqref{eq:w-corner-upper-bound} follows.
\end{proof}

\section{Moving Planes and Ball Rigidity}
\label{sec:moving-planes-ball-rigidity}

\begin{proof}[Proof of Theorem~1.2]
Fix a unit direction $e$. Using the notation of Proposition~3.3, let $m_{e}=\lambda_{*}$ and abbreviate
\[
  H=H_{m_{e}},
  \qquad
  \Sigma=\Sigma_{m_{e}},
  \qquad
  Q=Q_{m_{e}},
  \qquad
  w(x)=u(Qx)-u(x).
\]
By Proposition~3.3 and continuity, $w\geq0$ throughout $H$. The nonnegativity is not restricted to the cap $\Sigma$: if $x\in H\setminus\Sigma$, then $u(x)=0$, and hence $w(x)=u(Qx)\geq0$. Moreover, $w$ is antisymmetric with respect to $\partial H$.

Assume that $w\not\equiv0$. Since $t\mapsto t^{q}$ is strictly increasing,
\begin{equation}
  (-\Delta)^{s}w
  =\Lambda\bigl((u\circ Q)^{q}-u^{q}\bigr)\geq0
  \qquad\text{in }\Sigma.
  \tag{7.1}\label{eq:w-supersolution-critical}
\end{equation}
Thus $w$ is an entire antisymmetric supersolution with potential $c=0$; the strong maximum principle for antisymmetric functions gives
\begin{equation}
  w>0\qquad\text{in }\Sigma.
  \tag{7.2}\label{eq:w-positive-critical}
\end{equation}

The standard moving-plane geometric dichotomy for a $C^{2}$ boundary shows that, if the critical plane is not yet a symmetry plane, the first geometric critical position has one of the following two types of contact.

In the first case, the reflected boundary is internally tangent to $\partial\Omega$ away from the moving plane. Let $P\in\partial\Omega\cap H$ be a boundary point on the cap side, let $\widehat P=QP\in\partial\Omega$ be the reflected point of internal tangency, let $\eta$ be the unit normal at $P$ pointing into $\Sigma$, and set
\[
  x_{t}=P+t\eta,
  \qquad t>0.
\]
The two boundaries are tangent after reflection, so the first-order expansions of the distance functions give
\begin{equation}
  \delta(x_{t})=t+O(t^{2}),
  \qquad
  \delta(Qx_{t})=t+O(t^{2}).
  \tag{7.3}\label{eq:tangent-distance-expansions}
\end{equation}
Let $\psi=u/\delta^{s}$. By continuity of $\psi$ and because its trace on each of the two boundary portions equals $C_{0}$,
\begin{equation}
  \frac{w(x_{t})}{t^{s}}
  =\left(\frac{\delta(Qx_{t})}{t}\right)^{s}\psi(Qx_{t})
   -\left(\frac{\delta(x_{t})}{t}\right)^{s}\psi(x_{t})
  \longrightarrow0.
  \tag{7.4}\label{eq:tangent-upper-limit}
\end{equation}

On the other hand, $\Sigma$ has an interior tangent ball $B$ at $P$. By \eqref{eq:w-positive-critical} and continuity, one can choose a compact set $K\subset\Sigma$ having positive distance from both $B$ and the moving plane such that
\[
  |K|>0,
  \qquad
  \inf_{K}w>0.
\]
Here the potential is $c=0$, so $\|c\|_{\infty}=0<\lambda_{1}(B)$. Applying the antisymmetric Hopf lemma of Fall--Jarohs~\cite[Proposition~3.3]{FallJarohs2015} to \eqref{eq:w-supersolution-critical} yields a constant $c_{H}>0$ such that
\[
  w(x_{t})\geq c_{H}t^{s}
\]
for all sufficiently small $t$, contradicting \eqref{eq:tangent-upper-limit}.

In the second case, the moving plane is orthogonal to $\partial\Omega$ at a point $Q_{0}$. This case can occur only when $N\geq2$. After translating and rotating the coordinates, we may assume that
\[
  Q_{0}=0,
  \qquad
  \partial H=\{x_{1}=0\},
  \qquad
  H=\{x_{1}<0\},
\]
and let $\tau=e_{1}$ be the normal to the moving plane and $\nu=e_{2}$ the inward normal to $\partial\Omega$ at the origin. By the interior-ball property, one can choose $R>0$ sufficiently small so that
\[
  D=B_{R}(R\nu)\subset\Omega,
  \qquad
  \overline D\cap\partial\Omega=\{0\}.
\]
The center $R\nu$ lies on $\partial H$, so $D$ is symmetric with respect to the moving plane. Set $D^{*}=D\cap H$. Then $D^{*}\subset\Sigma$, and
\[
  w\geq0\quad\text{in }H,
  \qquad
  w>0\quad\text{in }D^{*},
  \qquad
  (-\Delta)^{s}w\geq0=0\cdot w\quad\text{in }D^{*}.
\]
The corner lemma of Fall--Jarohs~\cite[Lemma~4.4]{FallJarohs2015} therefore applies with the bounded potential $c=0$. Moreover, for all sufficiently small $t>0$,
\[
  |t(\nu-\tau)-R\nu|^{2}=t^{2}+(R-t)^{2}<R^{2},
\]
and hence $t(\nu-\tau)\in D^{*}$. The corner lemma yields constants $c_{1},t_{0}>0$ such that
\begin{equation}
  w\bigl(t(\nu-\tau)\bigr)\geq c_{1}t^{1+s},
  \qquad 0<t<t_{0}.
  \tag{7.5}\label{eq:corner-lower-bound}
\end{equation}
Proposition~6.2, however, gives
\[
  w\bigl(t(\nu-\tau)\bigr)=o(t^{1+s}),
\]
which contradicts \eqref{eq:corner-lower-bound}.

When $N=1$, the second, orthogonal-corner case does not arise; the first nonsymmetric critical position can only involve tangency at an endpoint, and the preceding Hopf argument has already ruled out every possibility. In conclusion, $w\not\equiv0$ leads to a contradiction, so $w\equiv0$ in $H$; antisymmetry then gives $w\equiv0$ in $\mathbb R^{N}$. Consequently, $u$ and $\Omega=\{u>0\}$ are symmetric with respect to
\[
  \Pi_{e}=\{x:x\cdot e=m_{e}\}.
\]
Since the direction $e$ is arbitrary, a symmetry plane is obtained in every direction.

It remains to rule out annuli and radial regions with holes. The moving process further gives, for every $\lambda\in(m_{e},\lambda^{+})$,
\begin{equation}
  u(Q_{\lambda}x)>u(x)
  \qquad(x\in\Sigma_{\lambda}).
  \tag{7.6}\label{eq:strict-moving-plane-monotonicity}
\end{equation}
Indeed, Proposition~3.3 first gives nonnegativity. If strict inequality failed before the critical value, the strong maximum principle would yield global symmetry with respect to $\partial H_{\lambda}$. The nonzero compactly supported function $u$ would then be symmetric with respect to two distinct parallel planes; the composition of the two reflections is a nonzero translation, contradicting the compact support of $u$.

Fix a direction $e$ and a line parallel to $e$,
\[
  L=\{z+te:t\in\mathbb R\},
  \qquad z\cdot e=0,
\]
and set
\[
  S_{L}=\{t:z+te\in\Omega\}.
\]
The set $S_{L}$ is symmetric about $m_{e}$. If $t_{2}\in S_{L}$ and $m_{e}<t_{1}<t_{2}$, take $\lambda=(t_{1}+t_{2})/2$. Then $z+t_{2}e\in\Sigma_{\lambda}$, and its reflection with respect to $\partial H_{\lambda}$ is exactly $z+t_{1}e$. By \eqref{eq:strict-moving-plane-monotonicity},
\[
  u(z+t_{1}e)>u(z+t_{2}e)>0,
\]
so $t_{1}\in S_{L}$. Taking instead $\lambda=(m_{e}+t_{2})/2$ yields, in the same way,
$u(z+m_{e}e)>u(z+t_{2}e)>0$, and hence $m_{e}\in S_{L}$. Together with symmetry about $m_{e}$, this shows that $S_{L}$ is an interval. Therefore, the intersection of $\Omega$ with every line is either an interval or the empty set, and hence $\Omega$ is convex.

Finally, define the centroid
\[
  x_{0}=\frac{1}{|\Omega|}\int_{\Omega}x\,dx.
\]
Every reflection preserving $\Omega$ fixes the centroid, and therefore $m_{e}=x_{0}\cdot e$. Thus $\Omega$ is invariant under reflection in every hyperplane through $x_{0}$. These reflections generate the full orthogonal group, so $\Omega$ is radially symmetric about $x_{0}$. A nonempty, bounded, open, convex radial set can only be a ball; hence $\Omega=B_{R}(x_{0})$. This proves the theorem.
\end{proof}

\begin{proof}[Proof of Main Theorem~1.1]
Let $u$ be the unique positive minimizer in (1.3). By (2.2), it satisfies
\[
  (-\Delta)^{s}u=\lambda_{s,p}(\Omega)u^{p-1}
  \quad\text{in }\Omega,
  \qquad
  u=0
  \quad\text{in }\Omega^{c}.
\]
Proposition~2.3 gives $u/\delta^{s}=C_{0}>0$. Standard boundedness and boundary regularity for fractional elliptic equations further yield
\[
  u\in L^{\infty}(\mathbb R^{N})\cap C^{s}(\mathbb R^{N}),
  \qquad
  u/\delta^{s}\in C(\overline\Omega).
\]
Thus $u$ satisfies all the function-space assumptions of Theorem~1.2. Taking
\[
  q=p-1\in(0,1),
  \qquad
  \Lambda=\lambda_{s,p}(\Omega)
\]
in that theorem shows that $\Omega$ is a ball.
\end{proof}

\end{document}